\documentclass{tran-l}

\usepackage{amssymb}
\usepackage{amsmath}

\usepackage{diagrams}


\newtheorem{theorem}{Theorem}[section]

\newtheorem{corollary}[theorem]{Corollary}

\newtheorem{definition}[theorem]{Definition}
\newtheorem{example}[theorem]{Example}

\newtheorem{lemma}[theorem]{Lemma}

\newtheorem{proposition}[theorem]{Proposition}

\newarrow {Partial} ----{harpoon}
\newarrow{Into} C--->
\newarrow{Onto} ----{>>}

\begin{document}

\title{Stability of properties of locales under groups}
\author{Christopher Townsend}
\maketitle
\begin{abstract}
Given a particular collection of categorical axioms, aimed at capturing properties of the category of locales, we show that if $\mathcal{C}$ is a category that satisfies the axioms then so too is the category $[ G, \mathcal{C}]$ of $G$-objects, for any internal group $G$. To achieve this we prove a general categorical result: if an object $S$ is double exponentiable in a category with finite products then so is its associated trivial $G$-object $(S, \pi_2: G \times S \rTo S)$. The result holds even if $S$ is not exponentiable.

An example is given of a category $\mathcal{C}$ that satisfies the axioms, but for which there is no elementary topos $\mathcal{E}$ such that $\mathcal{C}$ is the category of locales over $\mathcal{E}$. 

It is shown, in outline, how the results can be extended from groups to groupoids.

\end{abstract}

\section{Introduction}

%
%
%

Given a category $\mathcal{C}$ with finite products and an internal group $G$, a categorical axiom is said to be \emph{$G$-stable} provided if it is true of $\mathcal{C}$ then so too is it true of $[ G , \mathcal{C}]$, the category of $G$-objects. An example is the property of having equalizers. A non-example is the property `every epimorphism splits' which holds in the category $\mathbf{Set}$, if the axiom of choice is true, but for any group $G$, $G \rTo 1$ is an epimorphic $G$-homomorphism which is split if and only if $G$ is trivial.

A set of categorical axioms, investigated in \cite{towhofman}, \cite{closedsubgroup}, \cite{towaxioms} and \cite{towslice}, captures various properties of the category of locales. Certain aspects of locale theory can be developed axiomatically: proper and open maps are pullback stable, the Hofmann-Mislove result shown, the closed subgroup theorem holds, Plewe's result that triqutient surjections are of effective descent proved, the patch construction developed, etc. The purpose of this paper is to explore the question of whether the axioms are $G$-stable for an internal group $G$. The answer is that, with a minor modification that does not weaken the theory, the axioms are $G$-stable. The minor modification is that the existence of coequalizers is no longer an axiom. Intuitively a modification of this sort is needed as constructing coequalizers in $[G,\mathcal{C}]$ appears to require coequalizers that are stable under products, and this stability is an additional property not true of the category of locales. 

Once we have established that the axioms are $G$-stable we then establish a new result which is that not every category that satisfies the axioms is a category of locales for some topos. Any open or compact localic group that is not \'{e}tale complete (in the sense of Moerdijk, e.g. Section 7 of \cite{MoerClassTop}) provides an example.

Our next step is to verify that even without any coequalizers in $\mathcal{C}$, key results about coequalizers still hold. Specifically we show that triquotient surjections are coequalizers and that for every open or compact group $G$, there is a connected components adjunction $[ G , \mathcal{C}] \pile{\rTo \\ \lTo} \mathcal{C}$. 

Finally we include some comments on how it is easy to extend the results from internal groups to groupoids, given that the axioms are slice stable (\cite{towslice}).

\section{Preliminary categorical definitions and main categorical result}\label{prelim}
Let $\mathcal{C}$ be a category with finite products and $S$ an object of $\mathcal{C}$. We use the notation $S^X$ for the presheaf 
\begin{eqnarray*}
\mathcal{C}^{op} &\rTo &\mathbf{Set} \\
Y &\rMapsto &\mathcal{C}(Y\times X,S)
\end{eqnarray*}
It can be verified, using Yoneda's lemma, that $S^{X}$ is the exponential ${\bf y}S^{ {\bf y}X}$ in the presheaf category $[\mathcal{C}^{op},\mathbf{Set}]$, so the notation is reasonable (where $\bf{y}$ is the Yoneda embedding). We use $\mathcal{C}_S^{op}$ as notation for the full subcategory of $[\mathcal{C}^{op},\mathbf{Set}]$ consisting of objects of the form $S^X$; there is a contravariant functor $\mathcal{C} \rTo^{S^{(\_)}} \mathcal{C}_S^{op}$

Our first lemma is rather simple.
\begin{lemma}\label{hannah}
Let $\mathcal{C}$ be a category with finite products and $\mathbb{T}=(T,\eta,\mu)$ a monad on $\mathcal{C}$. Then for any two $\mathbb{T}$-algebras $(X,a: TX \rTo X)$ and $(S,s: TS \rTo S)$ the diagram 
\begin{eqnarray*}
(S,s)^{(X,a)}\rTo^{(S,s)^a} (S,s)^{(TX,\mu_X)} \pile{\rTo^{(S,s)^{\mu_X}} \\ \rTo_{(S,s)^{Ta}}} (S,s)^{(TTX,\mu_{TX})}
\end{eqnarray*}
is an equalizer in $(\mathcal{C}^{\mathbb{T}})^{op}_{(S,s)}$.
\end{lemma}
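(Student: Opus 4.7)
The plan is to reduce the equalizer statement to a pointwise calculation in $\mathbf{Set}$ and then construct the factoring algebra map explicitly. Since $(\mathcal{C}^{\mathbb{T}})^{op}_{(S,s)}$ is a full subcategory of the presheaf category $[(\mathcal{C}^{\mathbb{T}})^{op},\mathbf{Set}]$, and equalizers in presheaves are computed pointwise, the task reduces at each $\mathbb{T}$-algebra $(Y,b)$ to showing that the hom-set $\mathcal{C}^{\mathbb{T}}((Y,b)\times (X,a),(S,s))$ is the equalizer in $\mathbf{Set}$ of the two maps $\mathcal{C}^{\mathbb{T}}((Y,b)\times (TX,\mu_X),(S,s)) \rightrightarrows \mathcal{C}^{\mathbb{T}}((Y,b)\times (TTX,\mu_{TX}),(S,s))$ given by precomposition with $1_Y\times \mu_X$ and $1_Y\times Ta$, embedded via precomposition with $1_Y\times a$.

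The underlying motivation is that $(TTX,\mu_{TX}) \rightrightarrows (TX,\mu_X) \to (X,a)$ is the standard free-algebra presentation of $(X,a)$, which in $\mathcal{C}$ is a split coequalizer with splittings $\eta_{TX}$ and $\eta_X$, and is therefore absolute. The subtlety is that products in $\mathcal{C}^{\mathbb{T}}$ do not generally preserve this coequalizer, so one cannot simply invoke preservation; instead, I use the splittings as morphisms in $\mathcal{C}$ (they need not be algebra maps). Injectivity of the first map is then immediate: if two algebra maps agree after postcomposition with $1_Y\times a$, then precomposition with the $\mathcal{C}$-morphism $1_Y\times \eta_X$, a section of $1_Y\times a$, forces them to agree.

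The heart of the proof is the construction of a preimage. Given an algebra map $f:(Y,b)\times(TX,\mu_X)\to(S,s)$ with $f\circ(1_Y\times\mu_X)=f\circ(1_Y\times Ta)$, set $g=f\circ(1_Y\times\eta_X)$ as a morphism in $\mathcal{C}$. The identity $g\circ(1_Y\times a)=f$ is a short chase using the naturality equation $Ta\circ\eta_{TX}=\eta_X\circ a$, the unit law $\mu_X\circ\eta_{TX}=1_{TX}$, and the hypothesis on $f$. The delicate step, and what I expect to be the main obstacle, is verifying that $g$ is itself a $\mathbb{T}$-algebra map, i.e. $g\circ\langle b\circ T\pi_1,\,a\circ T\pi_2\rangle = s\circ Tg$. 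Rewriting $s\circ Tg$ via the algebra-map property of $f$ and using the other unit identity $\mu_X\circ T\eta_X=1_{TX}$, both sides can be put in the form $f$ precomposed with a common morphism $h=\langle b\circ T\pi_1,\eta_{TX}\circ T\pi_2\rangle:T(Y\times X)\to Y\times TTX$ followed by either $1_Y\times\mu_X$ or $1_Y\times Ta$; the hypothesis on $f$ then closes the square. It is worth noting that nothing in this argument requires $(S,s)$ to be exponentiable in $\mathcal{C}^{\mathbb{T}}$, consistent with the abstract's emphasis that the categorical machinery operates at the level of the presheaves $(S,s)^{(-)}$.
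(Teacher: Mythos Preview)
Your proof is correct, but it diverges from the paper's argument in one instructive respect. You write that ``products in $\mathcal{C}^{\mathbb{T}}$ do not generally preserve this coequalizer, so one cannot simply invoke preservation''; the paper does exactly that. The point is that for any algebra $(Z,c)$ the diagram
\[
(Z,c)\times(TTX,\mu_{TX})\;\rightrightarrows\;(Z,c)\times(TX,\mu_X)\;\rTo^{1_Z\times a}\;(Z,c)\times(X,a)
\]
is still $U$-split in $\mathcal{C}$, with splittings $1_Z\times\eta_X$ and $1_Z\times\eta_{TX}$; since the monadic $U$ creates coequalizers of $U$-split pairs, this is a coequalizer in $\mathcal{C}^{\mathbb{T}}$. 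The existence, uniqueness, and algebra-map property of the factorisation $g$ then all drop out of the universal property at once, with no separate verification needed. Your explicit computation that $g$ is an algebra homomorphism is, in effect, a by-hand reproof of this creation result in the special case at hand: the morphism $h=\langle b\circ T\pi_1,\eta_{TX}\circ T\pi_2\rangle$ you introduce is precisely (the relevant piece of) the $U$-splitting. So your route is more elementary and self-contained, while the paper's is shorter and isolates the structural reason the argument works.
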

\begin{proof}
If $\epsilon: (S,s)^{(Y,b)} \rTo (S,s)^{(TX,\mu_X)}$ is a natural transformation such that $(S,s)^{\mu_X}\epsilon=(S,s)^{Ta}\epsilon$ then define $\bar{\epsilon}:(S,s)^{(Y,b)}\rTo (S,s)^{(X,a)}$ by setting $\bar{\epsilon}_{(Z,c)}(u)$ to
\begin{eqnarray*}
Z \times X \rTo^{Id_Z \times \eta_X} Z \times TX \rTo^{\epsilon_{(Z,c)}(u)}S\text{.}
\end{eqnarray*}
This is well defined (i.e. defines a $\mathbb{T}$-algebra homomorphism from $(Z,c) \times (X,a)$ to $(S,s)$) because 
\begin{eqnarray*}
(Z,c) \times (TTX,\mu_X) \pile{ \rTo^{Id_Z \times \mu_X} \\ \rTo_{Id_Z \times Ta}} (Z,c) \times (TX, \mu_X) \rTo^{Id_Z \times a} (Z,c) \times (X,a)
\end{eqnarray*}
is a coequalizer in $\mathcal{C}^{\mathbb{T}}$ (it is $U$-split, by $Id_Z \times \eta_X$ and $Id_Z \times \eta_{Ta}$, where $U: \mathcal{C}^{\mathbb{T}} \rTo \mathcal{C}$ is the forgetful functor and $U$, being monadic, creates coequalizers for $U$-split forks).
\end{proof}
Recall that an adjunction $L\dashv R:\mathcal{D}\pile{\rTo \\ \lTo} \mathcal{C}$ between categories, both with finite products, satisfies \emph{Frobenius reciprocity} provided the map
$L(R(X)\times W)\rTo^{(L\pi_{1},L\pi_{2})}LRX\times LW\rTo^{\varepsilon _{X}\times Id_{LW}}X\times LW$ is an isomorphism for all objects $W$ and $X$ of $\mathcal{D}$
and $\mathcal{C}$ respectively. For example any morphism $f:X\rTo Y$ of a cartesian category $\mathcal{C}$ gives rise to a pullback adjunction $\Sigma_f \dashv f^*: \mathcal{C}/X \rTo \mathcal{C}/X$ that satisfies Frobenius reciprocity. For another example if $G=(G,m:G \times G \rTo G, e : 1 \rTo G, i:G \rTo G)$ is a group object in a category $\mathcal{C}$ with finite products, then the adjunction $G  \times (\_) \dashv U : \mathcal{C} \rTo  $$ [ G , \mathcal{C}]$ satisfies Frobenius reciprocity. Here $G \times (\_)$ sends an object $X$  of $\mathcal{C}$ to the $G$-object $(G\times X, m \times Id_X)$ and $U$ is the forgetful functor (forget the $G$ action). The counit of this adjunction, at a $G$-object $(X,a)$, is given by the $G$-homomorphism $a : G \times X \rTo X$ so establishing Frobenius reciprocity for the adjunction amounts to finding, for any object $Y$ of $\mathcal{C}$ and any $G$-object $(X,a)$, an inverse for $(G \times X \times Y, m \times Id_X \times Id_Y) \rTo^{(a\pi_{23},\pi_{13})} (X,a) \times (G \times Y,m \times Id_Y)$. The inverse is given by $  X \times G \times Y \rTo^{(\pi_2,a(i\pi_2,\pi_1),\pi_3)} G \times X \times Y$. Another way to establish Frobenius reciprocity is to recall that for any $G$-object $(X,a:G\times X \rTo X)$ there is a $G$-isomorphism $(G,m) \times (X, \pi_2) \cong (G,m) \times (X,a)$.

The adjunctions $G \times (\_) \dashv  U$ are key to the considerations of this paper so we recall a couple of basic facts: (1) $U$ is monadic and (2) if $G_1$ and $G_2$ are two internal groups then to prove that $G_1$ is isomorphic to $G_2$ it is sufficient to exhibit an equivalence of categories $\psi: [ G_1 , \mathcal{C}] \rTo^{\simeq} $$[ G_2 , \mathcal{C}]$ which commutes with the adjunction; that is, there exists a natural isomorphism $\beta: \psi G_1 \times (\_) \rTo^{\cong} G_2 \times (\_)$. To see (2) notice that $G_i = [U \circ G_i \times (\_)](1)$ for $i=1,2$ and  $\psi$ (together with the natural isomorphism $\beta$) commute with the two monad structures.
The next lemma is a generalisation of change of base:
\begin{lemma}\label{Frob}
Let $\mathcal{C}$ and $\mathcal{D}$ be two categories with finite products and $L: \mathcal{D} \rTo \mathcal{C}$ and $R: \mathcal{C} \rTo \mathcal{D}$ two functors such that $L \dashv R$ and the adjunction satisfies Frobenius reciprocity. Then for any object $S$ of $\mathcal{C}$, $L \dashv R$ extends contravariantly to an adjunction $\mathcal{D}_{RS}^{op}\pile{\rTo \\ \lTo} \mathcal{C}^{op}_S$\end{lemma}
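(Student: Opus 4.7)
The plan is to realise the extended adjunction as the restriction of a standard adjunction between precomposition functors on the presheaf categories. I would write $L^\ast : [\mathcal{C}^{op}, \mathbf{Set}] \rTo [\mathcal{D}^{op}, \mathbf{Set}]$ for the precomposition functor $F \mapsto F(L(\_))$, and similarly $R^\ast : [\mathcal{D}^{op}, \mathbf{Set}] \rTo [\mathcal{C}^{op}, \mathbf{Set}]$ for $G \mapsto G(R(\_))$. The first step is the standard fact that $L \dashv R$ on the base categories induces an adjunction $L^\ast \dashv R^\ast$ on the presheaf categories; the transpose of $\phi : L^\ast F \rTo G$ is $\psi_Z := \phi_{RZ} \circ F(\varepsilon_Z)$ built from the counit $\varepsilon$ of $L \dashv R$, with inverse built from the unit, and Frobenius reciprocity plays no role at this step.

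The substantive step, where Frobenius enters, will be showing that these precomposition functors restrict to the two subcategories in question. For $S^X \in \mathcal{C}_S^{op}$, Frobenius rewrites $LW \times X$ as $L(W \times RX)$, and the hom-set bijection of $L \dashv R$ then identifies $(L^\ast (S^X))(W) = \mathcal{C}(LW \times X, S)$ with $\mathcal{D}(W \times RX, RS) = (RS)^{RX}(W)$ naturally in $W$, giving $L^\ast (S^X) \cong (RS)^{RX}$. The mirror calculation, using Frobenius in the form $L(RZ \times Y) \cong Z \times LY$, will yield $R^\ast ((RS)^Y) \cong S^{LY}$ naturally in $Z$.

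These identifications produce the required restrictions $\bar{R} : \mathcal{C}_S^{op} \rTo \mathcal{D}_{RS}^{op}$ (from $L^\ast$) and $\bar{L} : \mathcal{D}_{RS}^{op} \rTo \mathcal{C}_S^{op}$ (from $R^\ast$); note that $L^\ast$ ends up as the left adjoint even though it produces $R$ on exponents, which is precisely the ``contravariant'' flip referred to in the statement. Because both $\mathcal{C}_S^{op}$ and $\mathcal{D}_{RS}^{op}$ are full subcategories of the respective presheaf categories, hom-sets agree and the adjunction $L^\ast \dashv R^\ast$ restricts directly to $\bar{R} \dashv \bar{L}$ without further work. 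The one point needing care will be naturality of the Frobenius-based isomorphisms in the presheaf variable, but this follows from naturality of the Frobenius map and of the hom-set bijection, so the real work sits in the short Frobenius computation in the middle paragraph.
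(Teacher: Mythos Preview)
Your proposal is correct and follows essentially the same route as the paper: set up the precomposition adjunction $L^\ast \dashv R^\ast$ between the presheaf categories, use Frobenius reciprocity together with the $L \dashv R$ hom-bijection to show $L^\ast(S^X) \cong (RS)^{RX}$ and $R^\ast((RS)^W) \cong S^{LW}$, and then restrict along the full subcategories. The paper records precisely these two isomorphisms (writing them as $S^X L \cong RS^{RX}$ and $(RS)^W R \cong S^{LW}$) and notes that the unit and counit of the restricted adjunction are $S^{\varepsilon}$ and $(RS)^{\eta}$; your explicit remark about the contravariant flip ($L^\ast$ becoming the left adjoint $\bar{R}$ on exponents) is a helpful clarification of exactly this point.
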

This lemma is essentially originally shown in \cite{towgeom}. In the case that the adjunction is a pullback adjunction arising from a locale map and $S$ is the Sierpi\'{n}ski locale, the morphisms of $\mathcal{C}_{S}^{op}$ can be used to represent dcpo homomorphisms and the adjunction established by the lemma shows how to move dcpo homomorphisms between sheaf toposes; this is how the lemma can be viewed as a generalisation of change of base. Consult \cite{towaxioms} for more detail.
\begin{proof}
Precomposition with $L$ and $R$ defines for any adjunction $L \dashv R$ an adjunction between presheaf categories, $[\mathcal{D}^{op},\mathbf{Set}] \pile{ \rTo \\ \lTo } $$ [\mathcal{C}^{op},\mathbf{Set}]$. But the Frobenius condition implies for $W$ and $X$ of $\mathcal{D}$
and $\mathcal{C}$ respectively that $S^XL\cong RS^{RX}$ and $S^WR\cong {S}^{LW}$ and so the adjunction restricts to $\mathcal{D}_{RS}^{op}\pile{\rTo \\ \lTo} \mathcal{C}^{op}_S$ which can be seen to extend (via $S^{(\_)}$) the adjunction $L \dashv R$. The unit of the extension is given by $S^{\epsilon}$ and the counit by $RS^{\eta}$ where $\eta$ (respectively $\epsilon$) is the unit (counit) of $L \dashv R$.
\end{proof}
It is an exercise, based on the result just given, to verify that if $\delta : RS^{RX} \rTo RS^W$ then the adjoint transpose of $\delta$, written $\bar{\delta}: S^X \rTo S^{LW}$, is defined by setting, for any $u : Z \times X \rTo S$, $\bar{\delta}_Z(u)$ to be
\begin{eqnarray*}
Z \times LW \rTo^{[(\epsilon_Z \times Id_{LW})L(\pi_1,\pi_2)]^{-1}} L(RZ \times W) \rTo^{\widetilde{\delta_Z(Ru)}}S
\end{eqnarray*}
where $\tilde{(\_)}$ is the action of taking adjoint transpose under $L \dashv R$. Given this observation and our observation that the adjunction $G \times (\_) \dashv U : \mathcal{C} \pile{ \rTo \\ \lTo } $$[ G , \mathcal{C} ]$ satisfies Frobenius reciprocity the following corollary is almost immediate:
\begin{corollary}\label{nathan}
Let $\mathcal{C}$ be a category with finite products and $G$ an internal group. For any objects $Y$ and $S$ of $\mathcal{C}$ and $(X,a)$ a $G$-object,
\begin{eqnarray*}
Nat[S^{X},S^Y] \cong Nat[(S,\pi_2)^{(X,a)},(S,\pi_2)^{(G \times Y, m \times Id_Y)}]
\end{eqnarray*}
naturally in both arguments. The mate of $\delta:S^X \rTo S^Y$, evaluated at $u: (Z,c) \times (X,a) \rTo (S,\pi_2)$ is given by
\begin{eqnarray*}
Z \times G \times Y \rTo^{(c(i \pi_2 , \pi_1),\pi_3)} Z \times Y \rTo^{\delta_Z(u)} S
\end{eqnarray*}
i.e. $(z,g,y)$ is in $\bar{\delta}_{(Z,c)}(u)$ if and only if $(g^{-1}z,y)$ is in $\delta(u)$.
\end{corollary}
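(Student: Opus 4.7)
My plan is to apply Lemma~\ref{Frob} to the adjunction $G \times (\_) \dashv U : \mathcal{C} \to [G, \mathcal{C}]$, whose Frobenius reciprocity was verified just above. Taking the ambient object to be $(S, \pi_2) \in [G, \mathcal{C}]$, whose image under $U$ is $S$, the lemma produces a contravariant adjunction $[G, \mathcal{C}]^{op}_{(S, \pi_2)} \pile{\rTo \\ \lTo} \mathcal{C}^{op}_S$. Reading off the isomorphisms $S^X L \cong RS^{RX}$ and $S^W R \cong S^{LW}$ supplied by the proof of Lemma~\ref{Frob} in this case gives $(S, \pi_2)^{(X, a)} \circ L \cong S^{U(X, a)} = S^X$ and $S^Y \circ U \cong (S, \pi_2)^{LY} = (S, \pi_2)^{(G \times Y, m \times Id_Y)}$, so the hom-set bijection of the extended adjunction is exactly the asserted $Nat[S^X, S^Y] \cong Nat[(S, \pi_2)^{(X, a)}, (S, \pi_2)^{(G \times Y, m \times Id_Y)}]$, natural in $(X, a)$ and in $Y$.

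For the explicit formula I would feed our adjunction into the description of $\bar{\delta}$ given in the exercise following Lemma~\ref{Frob}. Three inputs are needed. First, the counit of $G \times (\_) \dashv U$ at $(Z, c)$ is the action $c : (G \times Z, m \times Id_Z) \to (Z, c)$. Second, the adjoint transpose across $G \times (\_) \dashv U$ of a morphism $f : Z \times Y \to S$ of $\mathcal{C}$ is the $G$-homomorphism $f \pi_{23} : (G \times Z \times Y, m \times Id) \to (S, \pi_2)$, since any $G$-homomorphism into a trivial $G$-object must discard its $G$-coordinate. Third, the canonical comparison $(\epsilon_{(Z, c)} \times Id_{LW}) \circ (L\pi_1, L\pi_2) : G \times Z \times Y \to Z \times G \times Y$ acts as $(g, z, y) \mapsto (c(g, z), g, y)$.

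The one genuine calculation is to invert the last map. Using that $c$ is an action, one verifies $c(g, c(i(g), z)) = c(m(g, i(g)), z) = c(e, z) = z$, so the inverse is $(z, g, y) \mapsto (g, c(i(g), z), y)$, which in the paper's projection notation on $Z \times G \times Y$ is $(\pi_2, c(i\pi_2, \pi_1), \pi_3)$. Composing with $\widetilde{\delta_Z(u)} = \delta_Z(u) \circ \pi_{23}$ then collapses immediately to $(z, g, y) \mapsto \delta_Z(u)(c(i(g), z), y)$, which is the stated composite $Z \times G \times Y \rTo^{(c(i\pi_2, \pi_1), \pi_3)} Z \times Y \rTo^{\delta_Z(u)} S$, and matches the reformulation ``$(z, g, y) \in \bar{\delta}_{(Z,c)}(u)$ iff $(g^{-1}z, y) \in \delta(u)$''. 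No substantial obstacle remains: the corollary is Lemma~\ref{Frob} applied to one specific adjunction, with the group-action ``change of variables by $g^{-1}$''---the same trick already used to establish Frobenius reciprocity---doing the bookkeeping needed for the explicit formula.
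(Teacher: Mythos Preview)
Your proposal is correct and follows essentially the same approach as the paper: both invoke Lemma~\ref{Frob} applied to $G\times(\_)\dashv U$ together with the explicit description of $\bar{\delta}$ given in the exercise after that lemma, the key observation being that the adjoint transpose of $\delta_Z(u)$ into the trivial $G$-object $(S,\pi_2)$ is $\delta_Z(u)\circ\pi_{23}$. You spell out the inversion of the Frobenius comparison map more carefully than the paper does, but the argument is the same.
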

\begin{proof}
In addition to the comments in the preamble, observe that the adjoint transpose of $\delta_Z(u)$ (under $G \times (\_) \dashv U$) is given by $G \times Z \times Y \rTo^{\pi_{23}} Z \times Y \rTo^{\delta(u)} S$ because $S$ has the trivial action.
\end{proof}

Why are we so interested in these natural transformstions? Essentially because they are by construction the points of the double exponential $S^{S^X}$; if that double exponential exists. The category of locales provides an example of a category where exponentials do not always exists (not all locales are locally compact) but for which double exponentiation (at the Sierpi\'{n}ski locale at least) does always exist, \cite{victow}. Therefore there is good reason to investigate double exponentiation categorically in the absence of an assumption of cartesian closedness and these natural transformations play a central role. Let us make this more precise, beginning with a definition.

\begin{definition}
An object $S$ in a category $\mathcal{C}$ with finite products is \emph{double exponentiable} provided for every other object $X$ the exponential $({\bf y}S)^{S^X}$ exists in $[\mathcal{C}^{op},\mathbf{Set}]$ and is representable.
\end{definition}

If an object is double exponentiable then a strong \emph{double exponential} monad can be defined on $\mathcal{C}$; its functor part sends an object $X$ to the object that represents $({\bf y}S)^{S^X}$ and the rest of the monad structure and the strength are determined by the universal property of the double exponential. The key universal property can be expressed by saying that if $P(X)$ is the functor part of the double exponential monad, evaluated at $X$, then for any other object $Y$, there is a bijection, natural in $X$ and $Y$, between morphisms $Y \rTo P(X)$ and natural transformations $S^X \rTo S^Y$. Notice that if $S$ is double exponentiable, the opposite of the Kleisli category of the double power monad, $\mathcal{C}^{op}_P$, can be identified with $\mathcal{C}^{op}_S$ (i.e. the full subcategory of $[\mathcal{C}^{op},\mathbf{Set}]$ consisting of objects of the form $S^X$). Composition of Kleisli arrows is just composition of natural transformations. We will treat the opposite of the Kleisli category as this full subcategory below without notating the equivalence.

We can now prove a categorical proposition which is of general interest and is the main technical insight of this paper.

\begin{proposition}\label{main}
Let $\mathcal{C}$ be a category with finite products, $G$ an internal group and $S$ a double exponentiable object. Then $(S,\pi_2)$ is a double exponentiable object in $[G,\mathcal{C}]$.
\end{proposition}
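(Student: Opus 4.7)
The plan is to construct a representing $G$-object for the presheaf $(Y,b) \mapsto Nat[(S,\pi_2)^{(X,a)}, (S,\pi_2)^{(Y,b)}]$ on $[G,\mathcal{C}]$ in the form $(P(X), \alpha)$, where $P(X)$ is the object representing $S^{S^X}$ in $\mathcal{C}$ and $\alpha : G \times P(X) \rTo P(X)$ is a $G$-action to be identified. The key observation is that representability is already known on free $G$-objects: writing $L = G \times (\_)$ with $L \dashv U$, Corollary~\ref{nathan} combined with the double exponentiability of $S$ gives $Nat[(S,\pi_2)^{(X,a)}, (S,\pi_2)^{LZ}] \cong \mathcal{C}(Z, P(X))$ naturally in $Z$. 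The task is therefore to bootstrap from free $G$-objects to arbitrary $(Y,b)$.

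Every $G$-object $(Y,b)$ is the coequalizer in $[G,\mathcal{C}]$ of its canonical free resolution $L^2 Y \pile{\rTo^{\epsilon_{LY}} \\ \rTo_{Lb}} LY$, where $\epsilon$ is the counit of $L \dashv U$. Writing $T = UL = G \times (\_)$ for the associated monad on $\mathcal{C}$, Lemma~\ref{hannah} applied to the trivial $T$-algebra $(S,\pi_2)$ produces an equalizer
\begin{equation*}
(S,\pi_2)^{(Y,b)} \rTo (S,\pi_2)^{LY} \pile{\rTo \\ \rTo} (S,\pi_2)^{L^2 Y}
\end{equation*}
in $([G,\mathcal{C}])^{op}_{(S,\pi_2)}$. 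Since $Nat[(S,\pi_2)^{(X,a)}, -]$ is representable and so preserves equalizers, and using Corollary~\ref{nathan} twice (noting that $L^2 Y = L(G \times Y)$) to identify the two right-hand Nat-sets, I would obtain an equalizer of sets
\begin{equation*}
Nat[(S,\pi_2)^{(X,a)}, (S,\pi_2)^{(Y,b)}] \rTo \mathcal{C}(Y, P(X)) \pile{\rTo \\ \rTo} \mathcal{C}(G \times Y, P(X)).
\end{equation*}
By the explicit formula in Corollary~\ref{nathan}, the map induced by $Lb$ is precomposition with $b : G \times Y \rTo Y$. The map induced by $\epsilon_{LY}$ is natural in $Y$, so by the Yoneda lemma it has the form $\phi \mapsto \alpha \circ (Id_G \times \phi)$ for a uniquely determined morphism $\alpha : G \times P(X) \rTo P(X)$; this defines $\alpha$. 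The equalizer condition then reads $\phi \circ b = \alpha \circ (Id_G \times \phi)$, which is precisely the $G$-homomorphism condition from $(Y,b)$ to $(P(X), \alpha)$.

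The main obstacle is verifying that $\alpha$ satisfies the $G$-action axioms, so that $(P(X),\alpha)$ is a genuine $G$-object. I would establish these by tracing the monad laws for $T$ through the identifications above: the unit law $\alpha \circ (e \times Id_{P(X)}) = Id_{P(X)}$ reflects the triangle identity $\epsilon_L \circ L\eta = Id_L$, and the associativity $\alpha \circ (m \times Id_{P(X)}) = \alpha \circ (Id_G \times \alpha)$ reflects $\epsilon_L \circ L\epsilon_L = \epsilon_L \circ \epsilon_{L^2}$, both of which can be witnessed by applying the same construction at further levels of the free resolution of $(Y,b)$. Once $(P(X),\alpha)$ is known to be a $G$-object, the equalizer bijection becomes $[G,\mathcal{C}]((Y,b), (P(X), \alpha)) \cong Nat[(S,\pi_2)^{(X,a)}, (S,\pi_2)^{(Y,b)}]$, natural in $(Y,b)$ by naturality of Lemma~\ref{hannah}, Corollary~\ref{nathan} and Yoneda, exhibiting $(P(X),\alpha)$ as the double exponential of $(S,\pi_2)$ at $(X,a)$ in $[G,\mathcal{C}]$.
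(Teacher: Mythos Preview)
Your proposal is correct and follows the same overall strategy as the paper: use Lemma~\ref{hannah} to reduce $Nat[(S,\pi_2)^{(X,a)},(S,\pi_2)^{(Y,b)}]$ to an equalizer of the two values at free $G$-objects, and use Corollary~\ref{nathan} to identify those values with $\mathcal{C}(Y,P(X))$ and $\mathcal{C}(G\times Y,P(X))$. The one genuine difference is how the $G$-action on $P(X)$ is obtained. The paper defines it directly from the strength of the double exponential monad, $a^{P}=P(a)\circ t_{G,X}$, so that the unit and associativity laws for the action follow at once from the strength axioms; the remaining work is then the explicit mate calculation showing that the condition $\delta^{G}\,\mathbb{S}^{a}=\mathbb{S}^{b}\,\delta$ (i.e.\ that $\lceil\delta\rceil:Y\to P(X)$ is a $G$-homomorphism) matches the equalizer condition $(S,\pi_2)^{m\times Id_Y}\bar\delta=(S,\pi_2)^{Id_G\times b}\bar\delta$. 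You instead let Yoneda manufacture $\alpha$ from the map induced by $\epsilon_{LY}$, so that the equalizer condition is the $G$-homomorphism condition by construction, but must then discharge the unit and associativity of $\alpha$ from the monad identities. Both routes work; the paper's buys an immediate action at the cost of a concrete calculation, while yours trades that calculation for a deferred (but routine) verification of the action laws. One cosmetic point: your phrase ``$Nat[(S,\pi_2)^{(X,a)},-]$ is representable'' is doing no work---what you use is simply that hom-functors preserve limits, and Lemma~\ref{hannah} already gives the equalizer in the full subcategory in which you are homming.
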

\begin{proof}
Let $(X,a)$ be a $G$-object. Our first observation is that $PX$ (i.e. the object representing ${\bf y}S^{S^X}$) can be made into a $G$-object by defining $a^{P}$ to be $G \times PX\rTo^{t_{G,X}} P(G \times X) \rTo^{P(a)}PX$, where $t$ is the strength on $P$. This follows by application of the definition of strength ($t_{1,X}\cong Id_{PX}$ and $t_{X \times Y, Z} = t_{X, Y\times Z} (Id_X \times t_{Y,Z})$). For any other $G$-object $(Y,b)$, $G$-homomorphisms from $Y$ to $PX$ correspond to natural transformations $\delta: S^X \rTo S^Y$ with the property that $\delta^G S^a = S^b\delta$. So to conclude the proof all we need to do is to show that such natural transformations are in bijection with natural transformations $(S,\pi_2)^{(X,a)}\rTo (S,\pi_2)^{(Y,b)}$. (The bijection must be natural in $(Y,b)$, but this aspect is straightforward and is not commented on further.)

Lemma \ref{hannah}, with $\mathbb{T}$ the monad induced by $ G\times (\_) \dashv U$, shows that natural transformations $\epsilon:(S,\pi_2)^{(X,a)}\rTo (S,\pi_2)^{(Y,b)}$ are in (natural) bijection with natural transformations $\epsilon':(S,\pi_2)^{(X,a)}\rTo (S,\pi_2)^{(G \times Y, m \times Id_Y)}$ such that $(S,\pi_2)^{m \times Id_Y} \epsilon' = (S,\pi_2)^{Id_G \times b} \epsilon' $ and the corollary shows 
\begin{eqnarray*}
Nat[S^{X},S^Y] \cong Nat[(S,\pi_2)^{(X,a)},(S,\pi_2)^{(G \times Y, m \times Id_Y)}]\text{.}
\end{eqnarray*}
Since this last bijection is natural we can see that the mate of $S^b\delta$ is $(S,\pi_2)^{Id_G \times b} \bar{\delta}$ where $\bar{\delta}$ is the mate of $\delta:S^X \rTo S^Y$. So to complete the proof all that is required is a verification that $\overline{\delta^G S^a} = (S,\pi_2)^{m \times Id_Y}   \bar{\delta}$.

Say we are given a $G$-homomorphism $u:(Z,c) \times (X,a) \rTo (S,\pi_2)$. By the corollary we have that $(z,g_1,g_2,y)$ belongs to $[(S,\pi_2)^{m \times Id_Y} \bar{\delta}]_{(Z,c)}(u)$ if and only if $(g_2^{-1}g_1^{-1}z,y)$ belongs to $\delta_Z(u)$. Now since $u$ is a $G$-homomorphism $u(z,gx)=u(g^{-1}z,x) $ and so by applying naturality of $\delta$ at $Z \times G \rTo^{c(i \pi_2, \pi_1)} Z$ we have $\delta_{Z \times G}( u ( Id_Z \times a))(z,g,y)=\delta_Z(u)(g^{-1}z,y)$. But then $\overline{\delta^G S^a}(u)$ is given by 
\begin{diagram}
Z \times G \times G \times Y  & \rTo & Z \times G \times Y & \rTo & Z \times Y  & \rTo^{\delta_Z(u)}  & S\\
(z,g_1,g_2,y) & \mapsto & (g_1^{-1}z,g_2,y) & \mapsto & (g_2^{-1}g_1^{-1}z,y) & & \\
\end{diagram}

\end{proof}

One of our categorical axioms, to follow, is that the category in question must be order enriched. Finite limits are assumed to be order enriched finite limits; that is, their universal property is an order isomorphism, not just a bijection. The above analysis works equally well with order isomorphisms in place of bijections; therefore,
\begin{proposition}\label{main}
Let $\mathcal{C}$ be an order enriched category with finite products, $G$ an internal group and $S$ a double exponentiable object. Then $(S,\pi_2)$ is a double exponentiable object in $[G,\mathcal{C}]$.
\end{proposition}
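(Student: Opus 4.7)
The plan is to follow the proof of the non-enriched version of Proposition \ref{main} verbatim and verify at each step that the hom-set bijections used are in fact order isomorphisms. The overall strategy identifies $G$-homomorphisms $(Y,b) \rTo (PX, a^P)$ with natural transformations $(S,\pi_2)^{(X,a)} \rTo (S,\pi_2)^{(Y,b)}$ via an equalizer diagram and a ``change of base'' bijection, and then invokes the universal property of the double exponential $PX$ in $\mathcal{C}$. Since each of these identifications is a representability statement at the level of hom-posets, the task reduces to confirming monotonicity in both directions.

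First I would revisit Lemma \ref{hannah}. The map $\epsilon \mapsto \bar{\epsilon}$ is precomposition with $Id_Z \times \eta_X$ and is therefore order-preserving; the inverse direction uses the universal property of the $U$-split coequalizer on $(Z,c) \times (TTX,\mu_X) \pile{\rTo \\ \rTo} (Z,c) \times (TX,\mu_X) \rTo (Z,c) \times (X,a)$, which lifts to an order-enriched coequalizer because $U$ is monadic and so creates order-enriched $U$-split coequalizers. Similarly, the adjunction $G \times (\_) \dashv U$ is order-enriched (its transpose is given by a visibly monotone explicit formula), Frobenius reciprocity is an isomorphism, and Lemma \ref{Frob} then transports this to an order-enriched adjunction between the relevant full subcategories of presheaves; Corollary \ref{nathan}'s mate formula is manifestly monotone in both directions, so the corollary's bijection upgrades to an order isomorphism.

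Assembling these, the double exponentiability of $S$ in the order-enriched sense supplies, for each $X$, an order isomorphism between $\mathcal{C}(Y, PX)$ and $Nat[S^X, S^Y]$; composing with the order isomorphisms of the previous two steps yields
\[
[G,\mathcal{C}]((Y,b),(PX, a^P)) \cong Nat[(S,\pi_2)^{(X,a)}, (S,\pi_2)^{(Y,b)}],
\]
an order isomorphism natural in $(Y,b)$, which is exactly what is needed. The main obstacle, as anticipated, is bookkeeping rather than new content: one must confirm that the hom-posets in $[G,\mathcal{C}]$ are the sub-posets of the hom-posets in $\mathcal{C}$ cut out by $G$-equivariance, and that the posets of natural transformations in $[\mathcal{C}^{op}, \mathbf{Set}]$ between presheaves of the form $(S,\pi_2)^{(-)}$ inherit their order pointwise from $\mathcal{C}$'s hom-posets. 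No genuinely new categorical input is required beyond the non-enriched argument.
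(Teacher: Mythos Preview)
Your proposal is correct and matches the paper's approach exactly: the paper's proof of the order-enriched version consists solely of the remark that ``the above analysis works equally well with order isomorphisms in place of bijections,'' and your proposal simply spells out, step by step, why that remark is justified for Lemma~\ref{hannah}, Lemma~\ref{Frob}, and Corollary~\ref{nathan}. There is no divergence in strategy, only in the level of detail you supply.
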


We need to discuss \emph{order internal} lattices in the context of an order enriched category; i.e. lattices such that the meet and join operations are adjoints to the diagonal (so being a lattice, join semilattice, meet semilattice, distributive lattice etc is a property of the object, not additional structure on the object). The following lemma will be needed:
\begin{lemma}\label{meet}
If $\mathcal{C}$ is an order enriched catrgory with finite products, then for any order internal meet semilattice $A$, if $A_0 \pile{ \rInto^i \\ \lOnto_q}A$ is a splitting of an inflationary idempotent $\psi: A \rTo A$ (i.e. $Id_A \sqsubseteq \psi=iq$ and $Id_{A_0}=qi$), then $A_0$ is an order internal meet semilattice and $i$ is a meet semilattice homomorphism. Further, $q$ preserves the top element (i.e. $q1_A = 1_{A_0}$).
\end{lemma}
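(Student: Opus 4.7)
The inflationary idempotent hypotheses $Id_A \sqsubseteq iq$ and $qi = Id_{A_0}$ encode an order-enriched adjunction $q \dashv i$, whose adjunction isomorphism reads $qa \sqsubseteq x$ iff $a \sqsubseteq ix$ for any generalised elements $a$ of $A$ and $x$ of $A_0$. In particular $i$ is both order-preserving and order-reflecting. My plan is to exploit this adjunction to transfer the meet-semilattice structure from $A$ to $A_0$.

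The argument rests on a single \emph{key observation}: the idempotent $\psi = iq$ fixes $\top_A$ and every meet $m_A(ix, iy)$ (with $x, y$ generalised elements of $A_0$). Inflationarity supplies the $\sqsupseteq$ direction in each case. Conversely, $\psi \top_A \sqsubseteq \top_A$ because $\top_A$ is the top of $A$; and applying $q$ then $i$ to $m_A(ix, iy) \sqsubseteq ix$ and using $qi = Id_{A_0}$ gives $\psi m_A(ix, iy) \sqsubseteq ix$, and similarly $\sqsubseteq iy$, whence the universal property of $m_A$ yields $\psi m_A(ix, iy) \sqsubseteq m_A(ix, iy)$.

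With the key observation in hand, I would define $m_0 := q \circ m_A \circ (i \times i)$ and $\top_{A_0} := q \circ \top_A$. To verify $\Delta_{A_0} \dashv m_0$, chain the equivalences
\begin{align*}
z \sqsubseteq m_0(x, y)
  &\iff iz \sqsubseteq \psi m_A(ix, iy) = m_A(ix, iy) \\
  &\iff iz \sqsubseteq ix \text{ and } iz \sqsubseteq iy
  \iff z \sqsubseteq x \text{ and } z \sqsubseteq y,
\end{align*}
using order-preservation/reflection of $i$ at the ends, the key observation in the middle equality, and the meet universal property in $A$. For $\top_{A_0}$ being the top, from $iz \sqsubseteq \top_A$ apply $q$ and use $qiz = z$ to obtain $z \sqsubseteq q\top_A = \top_{A_0}$.

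Finally, $i$ is a meet semilattice homomorphism essentially by construction: $i m_0(x, y) = \psi m_A(ix, iy) = m_A(ix, iy)$ and $i \top_{A_0} = \psi \top_A = \top_A$, both by the key observation. The identity $q\top_A = \top_{A_0}$ is the very definition of $\top_{A_0}$. The only real content of the lemma is the key observation; once that is isolated, the rest is a routine unwinding of the adjunction $q \dashv i$, the only mild subtlety being that all reasoning must be carried out at the level of generalised elements (morphisms from arbitrary test objects).
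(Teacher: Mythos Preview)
Your proof is correct and follows essentially the same route as the paper: both define $\top_{A_0} = q\top_A$ and $\sqcap_{A_0} = q\,\sqcap_A\,(i\times i)$, and the crux in each case is your ``key observation'' that $iq$ fixes $\top_A$ and $\sqcap_A(i\times i)$, which the paper proves by the same inequality-chasing (phrased diagrammatically via $\Delta_A \dashv \sqcap_A$ rather than via generalised elements). Your explicit identification of the adjunction $q \dashv i$ is a clean organising device that the paper leaves implicit, but the underlying argument is the same.
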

\begin{proof}
Define $1_{A_0}  :  1 \rTo A_0$ to be $q1_A$ (so $q$ preserves top) and $\sqcap_{A_0} : A_0 \times A_0 \rTo A_0$ to be $q\sqcap_A (i \times i ) : A \times A \rTo A$. It can be verified that $!^{A_0} \dashv 1_{A_0}$ and $\Delta_{A_0} \dashv \sqcap_{A_0} $ and so $A_0$ is an order internal meet semilattice. To prove $i$ is a meet semilattice homomorphism we need to show (i) that $i$ preserves the top element and (ii) $iq\sqcap_{A}(i \times i) = \sqcap_A (i \times i)$. For (i) notice that $Id_A \sqsubseteq i 1_{A_0} !^A$ because $Id_A \sqsubseteq iq$ and so $i1_{A_0}=1_A$ by uniqueness of right adjoints. For (ii), as $Id_A \sqsubseteq iq$ it just needs to be checked that $iq\sqcap_{A}(i \times i) \sqsubseteq \sqcap_A (i \times i)$; equivalently, $\Delta_A iq \sqcap_A ( i \times i ) \sqsubseteq i \times i$ since $\Delta_A \dashv \sqcap_A$. But this last inequality is clear because $\Delta_A iq = (i \times i )(q \times q) \Delta_A$, $\Delta_A \sqcap_A \sqsubseteq Id_{A \times A}$ and $(i \times i )(q \times q) (i \times i)= i \times i $. 
\end{proof}

If, further, $\mathcal{C}$ has finite coproducts and is distributive (i.e. the canonical map $X \times Y + X \times Z \rTo X \times (Y + Z) $ is an isomorphism for any three objects $X$, $Y$ and $Z$ and $ X\times 0 \cong 0$ for any $X$) then for any object $S$, $\mathcal{C}^{op}_S$ has products; $S^X \times S^Y$ is given by $S^{X+Y}$ and the final object is $S^0$.

If additionally, $S$ is an order internal lattice and is double exponentiable then provided $\mathcal{C}$ has equalizers (and so is cartesian) two submonads of $P$ can be defined; a lower one, whose points are those natural transformations $S^X \rTo S^Y$ that are join semilattice homomorphisms and an upper one, whose points are meet semilattice homomorphisms. By reversing the order enrichment you switch between the lower and upper submonads. By construction the opposite of the Kleisli categories of the lower and upper monads can be identified with subcategories $\mathcal{C}_S^{op}$; they have the same objects and have as morphisms those natural transformations that are join (respectively meet) semilattice homomorphisms. Notice that all objects of the opposites of the Kleisli categories are order internal lattices which are distributive if $S$ is. See \cite{towhofman} for more detail on the construction of the lower and upper submonads.

Our final categorical definition is that of an object which behaves like the Sierpi\'{n}ski space. Given a cartesian order enriched category, an object $\mathbb{S}$ is a \emph{Sierpi\'{n}ski object} if it is an order internal distributive lattice such that given a pullback
\begin{diagram}
a^{\ast }(i) & \rTo & 1 \\
\dInto &  & \dInto_i \\
X & \rTo^{a} & \mathbb{S}
\end{diagram}
$a$ is uniquely determined by $a^{\ast }(i)\rTo X$ for $ i:1\rInto \mathbb{S}$ equal to either $0_{\mathbb{S}}$ or $1_{\mathbb{S}}$.
If a Sierpi\'{n}ski object is double exponentiable then we use $\mathbb{P}$ for the double exponential monad and call it a \emph{double power} monad; $P_L$ and $P_U$ are used for the lower and upper power monads,  when these can be defined as submonads of $\mathbb{P}$.

\section{The axioms}\label{axioms}
{\bf Axiom 1.}
\emph{$\mathcal{C}$ is an order enriched category with order enriched finite limits and finite coproducts.}

{\bf Axiom 2.}
\emph{For any morphism  $f:X \rTo Y$ of $\mathcal{C}$ the pullback functor $f^*  : \mathcal{C}/Y \rTo \mathcal{C}/X$ preserves finite coproducts.}

The property of being order enriched and having finite limits is $G$-stable, for any internal group $G$, as finite limits are created in $\mathcal{C}$ and the order enrichment on $[G,\mathcal{C}]$ can be taken from $\mathcal{C}$. Given Axiom 2, $[G,\mathcal{C}]$ has coproducts since if $(X,a)$ and $(Y,b)$ are two $G$-objects then 
\begin{eqnarray*}
G \times ( X+ Y) \rTo^{\cong} (G \times X ) + (G \times  Y )\rTo{a +b }  X+Y
\end{eqnarray*}
makes $X+Y$ into a $G$-object that can be easily checked to be coproduct. The nullary case is similar. If $f$ is a morphism of $G$-objects (i.e. a $G$-homomorphism) then pullback along $f$ preserves coproduct in $[G,\mathcal{C}]$ since $G$-object pullback and coproduct are created in $\mathcal{C}$. Therefore, 
\begin{lemma}
Axioms  1 and 2 are jointly $G$-stable for any internal group $G$.
\end{lemma}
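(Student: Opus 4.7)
The plan is to verify each clause of Axioms~1 and~2 for $[G,\mathcal{C}]$, leaning throughout on the fact, noted in Section~\ref{prelim}, that the forgetful functor $U:[G,\mathcal{C}] \rTo \mathcal{C}$ is monadic. As a monadic functor, $U$ creates finite limits; and since the homsets of $[G,\mathcal{C}]$ sit inside the homsets of $\mathcal{C}$ (a $G$-homomorphism being a $\mathcal{C}$-morphism satisfying an equation), $[G,\mathcal{C}]$ inherits an order enrichment from $\mathcal{C}$ in which the created finite limits are automatically order enriched. This handles the enrichment and finite limit parts of Axiom~1.

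For coproducts, I would use that Axiom~2, applied to the unique morphism $G \rTo 1$, specialises to the distributivity isomorphism $G \times (X+Y) \cong (G \times X) + (G \times Y)$ in $\mathcal{C}$. Given two $G$-objects $(X,a)$ and $(Y,b)$, this lets one form the composite $G \times (X+Y) \cong (G \times X) + (G \times Y) \rTo X+Y$, using $a+b$ on the right, and declare it the action. Checking the unit and associativity laws, and that the $\mathcal{C}$-coprojections are $G$-homomorphisms, is routine post-composition with coprojections plus functoriality of $+$. The universal property transfers from $\mathcal{C}$: a cocone in $[G,\mathcal{C}]$ is \emph{a fortiori} a cocone in $\mathcal{C}$, and the uniquely induced $\mathcal{C}$-map is a $G$-homomorphism by one more coprojection-chase. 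The nullary case is identical, using $G \times 0 \cong 0$, which is itself a specialisation of Axiom~2.

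Finally, Axiom~2 for $[G,\mathcal{C}]$ is almost automatic: for a $G$-homomorphism $f$, both the pullback along $f$ and the coproducts in $[G,\mathcal{C}]$ are created by $U$ (the latter by the previous step), so the preservation of coproducts by $f^*$ transfers directly from the same statement in $\mathcal{C}$. I do not anticipate any real obstacle; the only genuine calculation is the verification of the $G$-action laws for $a+b$, which is clean once the distributivity isomorphism is in hand and is the main technical step of the argument.
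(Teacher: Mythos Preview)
Your proposal is correct and follows essentially the same route as the paper: finite limits and the order enrichment are created by the forgetful functor, the coproduct action on $X+Y$ is built via the distributivity isomorphism $G \times (X+Y) \cong (G \times X)+(G \times Y)$ (which the paper, like you, derives from Axiom~2), and Axiom~2 for $[G,\mathcal{C}]$ then holds because both pullbacks and coproducts are created in $\mathcal{C}$. The only difference is that you spell out a little more of the routine verification than the paper does.
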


{\bf Axiom 3.}
\emph{$\mathcal{C}$ has a Sierpi\'{n}ski object, $\mathbb{S}$.}

It is immediate that this axiom is $G$-stable, for any order enriched cartesian category $\mathcal{C}$, because pullbacks are created in $\mathcal{C}$. The canonical Sierpi\'{n}ski object in $[G,\mathcal{C}]$ is $(\mathbb{S},\pi_2)$.

{\bf Axiom 4.}
\emph{$\mathbb{S}$ is double exponentiable.}

That this axiom is $G$-stable follows from Proposition \ref{main}. Notice from the proposition that the morphisms of the Kleisli category $ [G , \mathcal{C} ]_{\mathbb{P}_G}$ can be identified with natural transformations $\delta: \mathbb{S}^X \rTo \mathbb{S}^Y$ with the property $\mathbb{S}^b\delta=\delta^G \mathbb{S}^a$. It is easy to see that the lower (upper) Kleisli maps correspond to $\delta $s that are join (meet) semilattice homomorphisms.

{\bf Axiom 5.}
\emph{For any objects $X$ and $Y$, any natural transformation $\alpha : \mathbb{S}^X\rTo \mathbb{S}^Y$ that is also a distributive lattice homomorphism, is of the form $\mathbb{S}^f$ for some unique $f:Y \rTo X$}

If $\epsilon:(S,\pi_2)^{(Y,b)} \rTo (S,\pi_2)^{(X,a)} $ is a natural transformtion that is also a distributive lattice homomorphism then the corresponding natural transformation $\delta: \mathbb{S}^X \rTo \mathbb{S}^Y$ is also a distributive lattice homomorphism and so, assuming the axiom, is equal to $\mathbb{S}^f$ for some unique $f: Y \rTo X$. However, by applying the uniqueness part of the axiom, we see that $f$ is a $G$-homomorphism. It is routine to then check, using the order isomorphism established in proposition \ref{main} that if $\delta$ is of the form $\mathbb{S}^f$ then $\epsilon$ must be $(\mathbb{S},\pi_2)^f$. This shows that the axiom is $G$-stable.

{\bf Axiom 6.}
\emph{(i) Inflationary idempotents split in the Kleisli category $\mathcal{C}_{P_L}$.}

\emph{(ii) Deflationary idempotents split in the Kleisli category $\mathcal{C}_{P_U}$.}

\cite{towhofman} shows that these conditions are equivalent to the assumption that the monad $P_L$ (respecitvely $P_U$) is KZ (respectively coKZ).

Say $\alpha: \mathbb{S}^X \rTo \mathbb{S}^X$ is an inflationary idempotent join semilattice homomorphism that splits as $ \mathbb{S}^{X_0}\pile{ \rInto^{\theta} \\ \lOnto_{\gamma}} \mathbb{S}^X$ in the (opposite of) the lower Kleisli category; so $\theta$ and $\gamma$ are both join semilattice homomorphisms. Then, in the presence of Axiom 5, $\theta$ must be equal to $\mathbb{S}^q$ for some unique $q$. This follows as lemma \ref{meet} shows that $\theta$ is a meet semilattice homomorphism. Notice also, by the `Further' part of that lemma, that $\gamma$ preserves top.

\begin{lemma}
Axiom 6 is $G$-stable (given Axioms 1-5).
\end{lemma}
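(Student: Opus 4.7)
The plan is to reduce to Axiom 6(i) in $\mathcal{C}_{P_L}$ and then lift the resulting splitting along the forgetful $[G,\mathcal{C}] \rTo \mathcal{C}$. Starting with an inflationary idempotent $\epsilon$ on $(X,a)$ in $[G,\mathcal{C}]_{(P_L)_G}$, I would apply the order enriched version of Proposition \ref{main} to obtain a corresponding natural transformation $\delta:\mathbb{S}^X \rTo \mathbb{S}^X$ which is an inflationary idempotent join semilattice homomorphism satisfying $\delta^G \mathbb{S}^a = \mathbb{S}^a \delta$, and then invoke Axiom 6(i) together with the observation preceding the lemma to split $\delta$ as $\mathbb{S}^{X_0} \pile{\rInto^{\theta} \\ \lOnto_{\gamma}} \mathbb{S}^X$ with $\theta = \mathbb{S}^q$ for a unique $q:X \rTo X_0$.

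The central task is to equip $X_0$ with a $G$-action. My starting point would be the observation that $\mathbb{S}^a \theta$ is fixed by $\delta^G$, since $\delta^G \mathbb{S}^a \theta = \mathbb{S}^a \delta \theta = \mathbb{S}^a \theta \gamma \theta = \mathbb{S}^a \theta$. Because $(-)^G$ applied to the splitting of $\delta$ gives a splitting of $\delta^G$ via $\theta^G = \mathbb{S}^{Id_G \times q}$ and $\gamma^G$, there is a factorisation $\mathbb{S}^a \theta = \theta^G \alpha$ where $\alpha := \gamma^G \mathbb{S}^a \theta$. I expect the main obstacle to be showing that $\alpha$ is a distributive lattice homomorphism (so that Axiom 5 supplies a unique $a_0:G \times X_0 \rTo X_0$ with $\alpha = \mathbb{S}^{a_0}$); the difficulty is that $\gamma^G$ itself need not preserve meets. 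To handle this I would apply Lemma \ref{meet} to the inflationary idempotent $\delta^G$ on the order internal meet semilattice $\mathbb{S}^{G \times X}$, giving that $\theta^G$ is a meet and top preserving homomorphism. Since $\mathbb{S}^a\theta$ is a distributive lattice homomorphism and $\theta^G$ is split mono, the chain $\theta^G\alpha(u\sqcap v) = \mathbb{S}^a\theta(u\sqcap v) = \theta^G\alpha(u) \sqcap \theta^G\alpha(v) = \theta^G(\alpha(u)\sqcap \alpha(v))$ forces $\alpha$ to preserve meets, and an analogous argument handles top; join and bottom preservation are immediate from the definition.

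To finish I would verify (i) that $q:(X,a) \rTo (X_0,a_0)$ is a $G$-homomorphism, (ii) that $(X_0, a_0)$ satisfies the group action axioms, and (iii) that $\gamma$ lifts to a Kleisli morphism in $[G,\mathcal{C}]_{(P_L)_G}$. Part (i) is immediate from $\theta^G \mathbb{S}^{a_0} = \mathbb{S}^a \theta$ and the faithfulness of $\mathbb{S}^{(-)}$ (Axiom 5). For (ii) I would note that each $\mathbb{S}^{Id_{G^n}\times q}$ is split mono in the presheaf category, hence mono, so that faithfulness of $\mathbb{S}^{(-)}$ upgrades each $Id_{G^n}\times q$ to an epimorphism in $\mathcal{C}$; the unit and associativity axioms for $a_0$ then descend from those of $a$ by post-composing with the relevant epimorphism. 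For (iii), a short calculation gives $\mathbb{S}^{a_0}\gamma = \alpha\gamma = \gamma^G \mathbb{S}^a \theta \gamma = \gamma^G \mathbb{S}^a \delta = \gamma^G \delta^G \mathbb{S}^a = \gamma^G \mathbb{S}^a$, i.e., exactly the $G$-equivariance required by Proposition \ref{main}; the underlying splitting equations $\gamma\theta = Id$ and $\theta\gamma = \delta$ then translate to a splitting of $\epsilon$ in $[G,\mathcal{C}]_{(P_L)_G}$.

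The argument for Axiom 6(ii) is order-dual: reverse the order enrichment and invoke the dual of Lemma \ref{meet}, so that the inclusion accompanying the splitting of a deflationary idempotent in the upper Kleisli category is forced to be a join homomorphism (and hence a distributive lattice homomorphism, and hence of the form $\mathbb{S}^q$ by Axiom 5); the same construction then runs with meets and joins interchanged.
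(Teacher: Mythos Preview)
Your proposal is correct and follows essentially the same route as the paper: define $\alpha=\gamma^G\mathbb{S}^a\mathbb{S}^q$ (the paper calls it $\nu$), show it is a distributive lattice homomorphism so that Axiom 5 yields the action on $X_0$, and verify that $\gamma$ and $\mathbb{S}^q$ commute with the actions. The only noticeable difference is in the meet-preservation step: the paper proves $\sqcap(\nu\times\nu)\sqsubseteq\nu\sqcap$ by an explicit inequality chase using the formula $\sqcap_{\mathbb{S}^{X_0}}=\gamma\sqcap_{\mathbb{S}^X}(\mathbb{S}^q\times\mathbb{S}^q)$ from Lemma~\ref{meet}, whereas you cancel the meet-preserving split mono $\theta^G=\mathbb{S}^{Id_G\times q}$ from the identity $\theta^G\alpha=\mathbb{S}^a\theta$ --- a slightly cleaner variant of the same idea.
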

In summary the proof follows by applying our description of the double power Kleisli morphisms of $[G,\mathcal{C}]$ in terms of the double power Kleisli morphisms of $\mathcal{C}$. 
\begin{proof}
If $(X,a)$ is a $G$-object and $\delta: \mathbb{S}^X \rTo \mathbb{S}^X$ an idempotent inflationary join semilattice homomorphism such that $\mathbb{S}^a\delta=\delta^G \mathbb{S}^a$, then $\delta$ factors as $\mathbb{S}^X \rOnto^{\gamma} \mathbb{S}^{X_0} \rInto^{\mathbb{S}^q} \mathbb{S}^X$; see the preamble to the statment of the lemma. Further $\delta^G$ factors as $\mathbb{S}^{Id_G \times q} \gamma^G$. Consider $\nu: \mathbb{S}^{X_0}\rTo \mathbb{S}^{G \times X_0}$ defined as $\gamma^G \mathbb{S}^a\mathbb{S}^q$. The two squares in the following diagram commute since $\gamma$ is a (split) epimorphism and $\mathbb{S}^{Id_G \times q}$ is a (split) monomorphism:
\begin{diagram}
\mathbb{S}^X & \rTo^{\gamma} & \mathbb{S}^{X_0} & \rTo^{\mathbb{S}^q } & \mathbb{S}^X \\
\dTo^{\mathbb{S}^{a}}  &  & \dTo_{\nu} &  & \dTo_{\mathbb{S}^a} \\
\mathbb{S}^{G \times X} & \rTo^{\gamma^G} & \mathbb{S}^{G \times X_0} & \rTo^{\mathbb{S}^{Id_G \times q}} & \mathbb{S}^{G \times X} \\
\end{diagram}
We now claim that $\nu$ is a meet semilattice homomorphism. To see this, by the `Further' part of Lemma \ref{meet}, we see that $\nu$ preserves top because both $\gamma$ and $\gamma^G$ preserve top. To establish preservation by $\nu$ of binary meets one needs but to check that $\sqcap_{\mathbb{S}^{G \times X_0}} (\nu \times \nu)\sqsubseteq \nu \sqcap_{\mathbb{S}^{X_0}}$. Now from Lemma \ref{meet} we know that $\sqcap_{\mathbb{S}^{X_0}} = \gamma \sqcap_{\mathbb{S}^X} ( \mathbb{S}^q \times \mathbb{S}^q)$ (and similarly for $\sqcap_{\mathbb{S}^{G \times X_0}}$). Therefore:
\begin{eqnarray*}
\nu \sqcap_{\mathbb{S}^{X_0}} & = & \gamma^G \mathbb{S}^a\mathbb{S}^q \gamma \sqcap_{\mathbb{S}^X} ( \mathbb{S}^q \times \mathbb{S}^q) \\
                                  & \sqsupseteq & \gamma^G \mathbb{S}^a  \sqcap_{\mathbb{S}^X} ( \mathbb{S}^q \times \mathbb{S}^q) \\  
                                   & = & \gamma^G   \sqcap_{\mathbb{S}^{G \times X}} (\mathbb{S}^a \times \mathbb{S}^a )( \mathbb{S}^q \times \mathbb{S}^q) \\  
                                  & = & \gamma^G  \sqcap_{\mathbb{S}^{G \times X} } (\mathbb{S}^{Id_G \times q}  \times \mathbb{S}^{Id_G \times q}) ( \nu \times \nu) \\        
                                  & = & \sqcap_{\mathbb{S}^{G \times X_0} }  ( \nu \times \nu) \text{.}\\       
\end{eqnarray*}
Since then $\nu$ is a distributive lattice homomorphism it is of the form $\mathbb{S}^t$ for some (unique) $t: G \times X_0 \rTo X_0$ and it is readily checked that $(X_0,t)$ is a $G$-object. By construction $\gamma$ and $\mathbb{S}^q$ commute with $\mathbb{S}^a$ and $\mathbb{S}^t$ and so correspond to morphisms of $[G,\mathcal{C}]^{op}_{\mathbb{P}_G}$ (i.e. natural transformations relative to the category of $G$-objects).
This proves stability of 6(i); part (ii) is order dual.
\end{proof}

{\bf Axiom 7.}
\emph{For any equalizer diagram }
\begin{diagram}
E & \rTo^{e} & X & \pile { \rTo^f \\ \rTo_g} & Y
\end{diagram}
in $\mathcal{C}$ the diagram
\begin{diagram}
\mathbb{S}^{X}\times \mathbb{S}^{X}\times \mathbb{S}^{Y} & \pile{ \rTo^{\sqcap (Id\times \sqcup )(Id\times Id\times \mathbb{S}^{f})} \\ \rTo_{\sqcap (Id\times \sqcup )(Id\times Id\times \mathbb{S}^{g})} } &  \mathbb{S}^{X} & \rTo{\mathbb{S}^{e}} & \mathbb{S}^{E}
\end{diagram}
\emph{is a coequalizer in $\mathcal{C}_{\mathbb{P}}^{op}$}.

Note that Axiom 7 does not break the symmetry given by the order
enrichment. A short calculation using the distributivity assumption on $%
\mathbb{S}$ shows that the composite $\sqcup (Id\times \sqcap )$ could have
been used in the place of $\sqcap (Id\times \sqcup )$.

Stability of this axiom is also straightforward as $\mathbb{S}^e$ is an epimorphism in $\mathcal{C}_{\mathbb{P}}^{op}$. In more detail say $(E,d)\rTo^e (X,a)$ is an equalizer of $f,g:(X,a) \pile{ \rTo \\ \rTo } (Y,b) $ in $[ G , \mathcal{C}]$ and $(Z,c)$ is a $G$-object, then for any $\delta : \mathbb{S}^X \rTo \mathbb{S}^Z$ which has $\mathbb{S}^c \delta = \delta^G \mathbb{S}^a$ we also have $\mathbb{S}^c \delta' \mathbb{S}^e= (\delta')^G \mathbb{S}^d\mathbb{S}^e$ if $\delta$ factors as $\delta'\mathbb{S}^e$ because $e$ is a $G$-homomorphism. $\delta'$ must then correspond to a morphism of $[G,\mathcal{C}]^{op}_{\mathbb{P}_G}$.

\begin{definition}
A category $\mathcal{C}$ satisfying the axioms is called a \emph{category of spaces}.
\end{definition}
\begin{example}
The category of locales relative to an elementary topos $\mathcal{E}$, written $\mathbf{Loc}_{\mathcal{E}}$, is a category of spaces. The axioms are all known properties of the category of locales; e.g. \cite{towaxioms} and \cite{towhofman}.
\end{example}

For clarity, collecting together the various observations already made:
\begin{theorem}
The axioms are $G$-stable for any internal group $G$; in other words if $\mathcal{C}$ is a category of spaces then so is $[G,\mathcal{C}]$ for any internal group $G$.
\end{theorem}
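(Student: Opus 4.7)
The plan is to assemble the per-axiom stability results that have been established throughout Section~\ref{axioms}, so the bulk of the work is already done and the theorem functions as a summary. First I would note that Axioms~1 and~2 were jointly shown $G$-stable in the lemma immediately following Axiom~2, because finite limits, the order enrichment, finite coproducts, and the preservation of coproducts by pullback are all created by the forgetful functor $[G,\mathcal{C}]\rTo\mathcal{C}$. Axiom~3 carries over with the canonical choice $(\mathbb{S},\pi_2)$, since the pullback characterisation of a Sierpi\'nski object transfers verbatim because pullbacks in $[G,\mathcal{C}]$ are created in $\mathcal{C}$.

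The main technical input is Axiom~4, which is exactly Proposition~\ref{main}. The decisive by-product of its proof, together with Lemma~\ref{hannah} and Corollary~\ref{nathan}, is an explicit description of the Kleisli morphisms of $[G,\mathcal{C}]_{\mathbb{P}_G}$: they correspond naturally to those natural transformations $\delta:\mathbb{S}^X\rTo\mathbb{S}^Y$ in $\mathcal{C}$ which satisfy the equivariance condition $\mathbb{S}^b\delta=\delta^G\mathbb{S}^a$, with the lower (respectively upper) Kleisli morphisms being those $\delta$ that are join (respectively meet) semilattice homomorphisms. This dictionary is the engine that drives the transfer of the remaining axioms.

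With the dictionary in hand, Axioms~5,~6 and~7 follow the patterns already recorded. For Axiom~5, an equivariant distributive lattice homomorphism $\delta$ is of the form $\mathbb{S}^f$ for a unique $f$, and the equivariance forces $f$ to be a $G$-homomorphism; the naturality clause of Proposition~\ref{main} then yields $\epsilon=(\mathbb{S},\pi_2)^f$. For Axiom~6 one transports a splitting of an inflationary idempotent by setting $\nu=\gamma^G\mathbb{S}^a\mathbb{S}^q$, verifies via Lemma~\ref{meet} that $\nu$ is a distributive lattice homomorphism, and then invokes Axiom~5 in $\mathcal{C}$ to produce a $G$-action $t$ with $\nu=\mathbb{S}^t$. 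For Axiom~7 the key observation is that $\mathbb{S}^e$ remains an epimorphism in $\mathcal{C}^{op}_{\mathbb{P}}$, so any factorisation of an equivariant $\delta$ through $\mathbb{S}^e$ automatically inherits the equivariance condition with respect to the $G$-action on the equalizer.

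The only genuine obstacle is Proposition~\ref{main} itself, together with the precise description of Kleisli morphisms in $[G,\mathcal{C}]$ that its proof produces; once these are available, each axiom is verified by a direct application of the dictionary and a routine check on the relevant semilattice homomorphism properties. I therefore expect the proof of the theorem to consist of nothing more than pointing back to the individual $G$-stability observations already recorded axiom by axiom.
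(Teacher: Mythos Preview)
Your proposal is correct and mirrors the paper exactly: the theorem is stated as a summary (``collecting together the various observations already made''), and its content is precisely the per-axiom $G$-stability arguments you recount, with Proposition~\ref{main} supplying the Kleisli dictionary that drives Axioms~4--7. There is nothing to add; your axiom-by-axiom recap matches the paper's treatment in each case.
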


\section{Categories of spaces that are not categories of locales}
In this section we provide a class of examples which shows that not every category of spaces is a category of locales for some elementary topos $\mathcal{E}$. To give this example we must first recall a few basic definitions and results about categories of spaces and a proposition about the representation of geometric morphisms as certain adjunctions between categories of locales. 
\begin{definition}
(1) A morphism $f:X \rTo Y $ of a category of spaces is \emph{open} if there exists $\exists_f : \mathbb{S}^X \rTo \mathbb{S}^Y$ left adjoint to $\mathbb{S}^f$ such that $\exists_f\sqcap_{\mathbb{S}^X}(Id_{\mathbb{S}^X} \times \mathbb{S}^f)=\sqcap_{\mathbb{S}^Y}(\exists_f \times Id_{\mathbb{S}^Y})$ (Frobenius condition).
(2) An object $X$ of a category of spaces is \emph{open} if $!:X \rTo 1$ is an open map.
(3) An object $X$ of a category of spaces is \emph{discrete} if it is open and $\Delta : X \rTo X \times X$ is open.

\end{definition}
In the case where the category of spaces is a category of locales, the usual meanings are recovered; \cite{towaxioms}. Any elementary topos $\mathcal{E}$ can be identified with the full subcategory of $\mathbf{Loc}_{\mathcal{E}}$ consisting of discrete objects. One easily checks all isomorphism are open maps (notice: $\exists_{\phi^{-1}}=\mathbb{S}^{\phi}$ for any isomorphism $\phi$), and the property of being an open map is stable under composition, relative to any category of spaces; $\exists_{fg}=\exists_f\exists_g$ for any composable pair of morphisms $f$ and $g$. Further, open maps are pullback stable (\cite{towaxioms}) and the usual Beck-Chevalley condition holds for any pullback square (where an open map is being pulled back).

\begin{lemma}
If $\mathcal{C}$ is a category of spaces and $G=(G,m,e,i)$ is an internal group then a $G$-homomorphism $f:(X,a) \rTo (Y,b)$ is open relative to $[ G , \mathcal{C} ]$ if and only if $f: X \rTo Y $ is open relative to $\mathcal{C}$.
\end{lemma}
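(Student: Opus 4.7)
The plan is to transfer the data witnessing openness back and forth between the two Kleisli categories $\mathcal{C}^{op}_{\mathbb{S}}$ and $[G,\mathcal{C}]^{op}_{\mathbb{P}_G}$ by invoking the identification established in Proposition \ref{main}. That identification is an order-isomorphism between morphisms $(\mathbb{S},\pi_2)^{(X,a)} \rTo (\mathbb{S},\pi_2)^{(Y,b)}$ and those natural transformations $\delta:\mathbb{S}^X \rTo \mathbb{S}^Y$ satisfying the equivariance condition $\mathbb{S}^b\delta=\delta^G \mathbb{S}^a$. Composition, identities, the binary products on the Kleisli side (which arise from coproducts in the base, created in $[G,\mathcal{C}]$ from $\mathcal{C}$ by Axiom 2) and the lattice operations on exponentials are all preserved by the correspondence, so adjointness and the Frobenius equation will transfer immediately once equivariance is established.

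For the forward direction, suppose $f$ is open in $\mathcal{C}$ via $\exists_f \dashv \mathbb{S}^f$ with Frobenius. The key point is to verify that $\exists_f$ itself satisfies the equivariance condition $\mathbb{S}^b \exists_f = (\exists_f)^G \mathbb{S}^a$. Using the $G$-isomorphism $(G,m) \times (X,\pi_2) \cong (G,m) \times (X,a)$ recalled in the preamble to Lemma \ref{Frob} (underlying map $(g,x) \mapsto (g, a(g,x))$), the commutative square
\begin{diagram}
G \times X & \rTo^{Id_G \times f} & G \times Y \\
\dTo^{a} & & \dTo_{b} \\
X & \rTo^{f} & Y \\
\end{diagram}
is isomorphic in $\mathcal{C}$ to the canonical pullback of $f$ along $b$, and hence is itself a pullback. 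Beck--Chevalley applied to this pullback (openness is pullback stable with Beck--Chevalley) gives $\mathbb{S}^b \exists_f = \exists_{Id_G \times f} \mathbb{S}^a$, and one observes that $\exists_{Id_G \times f} = (\exists_f)^G$ since both are pointwise adjoint to $\mathbb{S}^{Id_G \times f} = (\mathbb{S}^f)^G$. This is exactly the equivariance required. The induced Kleisli morphism in $[G,\mathcal{C}]^{op}_{\mathbb{P}_G}$ is then left adjoint to $(\mathbb{S},\pi_2)^f$ satisfying Frobenius, since unit, counit and the Frobenius equation are (in)equational statements in composition and binary meet which are preserved by the order-enriched correspondence.

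For the reverse direction, suppose $f$ is open in $[G,\mathcal{C}]$ via $\exists_f \dashv (\mathbb{S},\pi_2)^f$ with Frobenius. The image $\delta$ of $\exists_f$ in $\mathcal{C}^{op}_{\mathbb{S}}$ under the correspondence is automatically equivariant. The required adjointness $\delta \dashv \mathbb{S}^f$ and Frobenius equation in $\mathcal{C}^{op}_{\mathbb{S}}$ follow at once from the corresponding facts in $[G,\mathcal{C}]^{op}_{\mathbb{P}_G}$, because the correspondence is an order-isomorphism that respects composition, identities, products and binary meets. The main obstacle in the entire argument is the pullback check in the forward direction, which is handled by the $G$-iso trick; everything else is formal consequence of Proposition \ref{main}.
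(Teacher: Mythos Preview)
Your proof is correct and follows essentially the same strategy as the paper's: both directions pass through the order-isomorphism of Proposition \ref{main}, reducing the ``if'' direction to the equivariance identity $\mathbb{S}^b\exists_f = (\exists_f)^G\mathbb{S}^a$. The only difference is in how that identity is verified: the paper factors $b$ as $\pi_2^Y(\pi_1,b)$ and runs an explicit chain of equalities using Beck--Chevalley on the pullback of $f$ along $\pi_2^Y$ together with $\exists_{\phi^{-1}}=\mathbb{S}^{\phi}$ for isomorphisms, whereas you observe directly (via the isomorphism $(\pi_1,a)$) that the square with sides $a$, $b$, $f$, $Id_G\times f$ is itself a pullback and apply Beck--Chevalley once. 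Your route is slightly more conceptual and shorter; the paper's is more explicit but amounts to the same manipulation, since its factorisation through $(\pi_1,b)$ is precisely the isomorphism exhibiting your square as a pullback.
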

\begin{proof}
If $f$ is open as a $G$-homomorphism then there is a natural transformation $ (\mathbb{S}, \pi_2)^{(X,a)} \rTo (\mathbb{S}, \pi_2)^{(Y,b)} $ left adjoint to $(\mathbb{S}, \pi_2)^f$ and satisfying the Frobenius condition. But this natural transformation corresponds to a natural transformation $\mathbb{S}^X \rTo \mathbb{S}^Y $ which can be seen to witness that $f$ is open relative to $\mathcal{C}$. In the other direction if $f$ is open relative to $\mathcal{C}$ then there is $\exists_f : \mathbb{S}^X \rTo \mathbb{S}^Y $ left adjoint to $\mathbb{S}^f$ witnessing that $f$ is an open map of $\mathcal{C}$. So to complete the proof we can just check that the diagram
\begin{diagram}
\mathbb{S}^X & \rTo{\exists_f} & \mathbb{S}^Y \\
\dTo^{\mathbb{S}^a} &    &  \dTo_{\mathbb{S}^b} \\
\mathbb{S}^{G \times X} & \rTo{\exists_f^G} & \mathbb{S}^{G \times Y}\\ 
\end{diagram}
commutes, since then $\exists_f$ corresponds to a natural transformation  $ (\mathbb{S}, \pi_2)^{(X,a)} \rTo (\mathbb{S}, \pi_2)^{(Y,b)} $ relative to $[ G , \mathcal{C}]$, which can be seen to witness that $f$ is open as a $G$-homomorphism.

To prove that the square commutes, notice that $b: G \times Y \rTo Y $ factors as $G\times Y \rTo^{(\pi_1,b)} G \times Y \rTo^{\pi_2^Y} Y$ where the first factor is an isomorphism, and so
 
\begin{eqnarray*}
\mathbb{S}^b \exists_f & = & \mathbb{S}^{(\pi_1,b)} \mathbb{S}^{\pi_2^Y}\exists_f  \\
                        & = & \mathbb{S}^{(\pi_1,b)} \exists_{Id_G \times f}   \mathbb{S}^{\pi_2^X} \\
                        & = &  \exists_{(\pi_1,b(i \times Id_Y))} \exists_{Id_G \times f}   \mathbb{S}^{\pi_2^X} \\
                        & = &  \exists_{Id_G \times f} \exists_{(\pi_1,a(i \times Id_X))}   \mathbb{S}^{\pi_2^X} \\
                        & = &  \exists^G_ f   \mathbb{S}^{(\pi_1,a )}   \mathbb{S}^{\pi_2^X} \\
                        & = &  \exists^G_ f   \mathbb{S}^a \\
\end{eqnarray*}
where the second line is by Beck-Chevalley applied to the pullback square that is formed by pulling $f:X \rTo Y$ back along $\pi_2^Y : G \times Y \rTo Y$,  the third and fifth lines use $\exists_{\phi^{-1}}=\mathbb{S}^{\phi}$  for any isomorphism $\phi$, and the fourth line follows because $f$ is a $G$-homomorphism.

\end{proof}
If $G$ is a group in a category of spaces $\mathcal{C}$ then we use $BG$ for the full subcategory of $[ G , \mathcal{C}]$ consisting of discrete objects; the lemma can be applied to show that $BG$ is the full subcategory that consists of those $G$-objects $(X,a)$ such that $X$ is discrete relative to $\mathcal{C}$. So, in the case $\mathcal{C}=\mathbf{Loc}$, $BG$ recovers its usual meaning: $G$-sets. 

\begin{proposition}\label{geommorph}
Let $\mathcal{F}$ and $\mathcal{E}$ be two elementary toposes. There is an equivalence between the category of order enriched Frobenius adjunctions $L \dashv R : \mathbf{Loc}_{\mathcal{F}} \pile{\rTo \\ \lTo } \mathbf{Loc}_{\mathcal{E}}$ such that $R$ preserves the Sierp\'{n}ski locale and the category of geometric morphisms from $\mathcal{F}$ to $\mathcal{E}$. Every such Frobenius adjunction is determined up to isomorphism by the restriction of its right adjoint to discrete objects.
\end{proposition}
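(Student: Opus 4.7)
My plan is to exhibit quasi-inverse (2-)functors between the two categories. In the \emph{forward} direction, a geometric morphism $p: \mathcal{F} \to \mathcal{E}$ yields the change-of-base adjunction $L = \Sigma_p \dashv p^* = R$ between $\mathbf{Loc}_{\mathcal{F}}$ and $\mathbf{Loc}_{\mathcal{E}}$, where $\Sigma_p$ regards an internal frame in $\mathcal{F}$ as one in $\mathcal{E}$ via $p$ and $p^*$ is the frame pullback. Frobenius reciprocity, order enrichment, and preservation of the Sierpi\'{n}ski locale by $p^*$ are standard consequences of the base-change setup, the last following because $\mathbb{S}_{\mathcal{E}}$ is characterised as the classifier of open subobjects and $p^*$ preserves opens. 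Naturality in 2-cells promotes this assignment to a functor between the appropriate categories.

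In the \emph{backward} direction, given such an adjunction $L \dashv R$, I restrict $R$ to discrete objects. The crucial preliminary is that $R$ preserves discreteness. Since $R$ is a right adjoint it already preserves the terminal object and products, so it suffices to show $R$ preserves openness of a morphism $f: X \to Y$ in $\mathbf{Loc}_{\mathcal{E}}$. The witness $\exists_f \dashv \mathbb{S}^f$ transports across $L \dashv R$ in the style of Lemma~\ref{Frob}: using $R(\mathbb{S}_{\mathcal{E}}) \cong \mathbb{S}_{\mathcal{F}}$ one constructs a left adjoint of $\mathbb{S}^{Rf}$, and Frobenius reciprocity for $L \dashv R$ ensures that the Frobenius condition for $Rf$ is inherited from that for $f$. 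Applying this to $!: X \to 1$ and $\Delta: X \to X\times X$ establishes that $R$ preserves discreteness, so $p^* := R|_{\mathcal{E}}: \mathcal{E} \to \mathcal{F}$ is a well-defined finite-limit-preserving functor. A right adjoint $p_*$ between the toposes then exists by the special adjoint functor theorem, making $p$ a geometric morphism.

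One composite is quasi-inverse to the identity essentially by construction: restricting the change-of-base $p^*$ to discrete objects recovers $p^*|_\mathcal{E}$. The other composite---that an arbitrary $L \dashv R$ is determined up to isomorphism by the restriction of $R$ to discrete objects---is the \emph{main obstacle}. The strategy exploits the equivalence between locales and frames: any $\mathcal{E}$-locale $A$ is presented by its internal frame $\Omega A$, a discrete object of $\mathcal{E}$ equipped with algebraic structure, and since $R$ preserves finite limits and $\mathbb{S}$, it must carry the frame structure on $\Omega A$ in $\mathcal{E}$ to the frame structure on $p^*(\Omega A)$ in $\mathcal{F}$, whose associated $\mathcal{F}$-locale is canonically isomorphic to $R(A)$. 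Making this natural in $A$ produces the required isomorphism $R \cong (\text{recovered } p)^*$; Frobenius and the adjunction then force $L \cong \Sigma_p$, completing the equivalence.
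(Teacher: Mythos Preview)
The paper does not argue this proposition; it simply records that it is the main result of \cite{towgeom}. So there is no in-paper proof to compare against line by line---your proposal is effectively a sketch of what that reference must establish.

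Your outline is in the right spirit, but there is a genuine gap in the backward direction. You invoke the special adjoint functor theorem to manufacture the direct image $p_*$ once you have the finite-limit-preserving $p^* := R|_{\mathcal{E}}$. For arbitrary \emph{elementary} toposes this step fails: SAFT needs completeness (together with a cogenerator or solution-set condition), and an elementary topos is only guaranteed finite limits. The existence of $p_*$ must therefore be extracted from the data already present---the whole Frobenius adjunction $L \dashv R$ together with $R(\mathbb{S}) \cong \mathbb{S}$---rather than conjured from $R|_{\mathcal{E}}$ alone by abstract nonsense.

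A related weakness appears in your final paragraph: you assert that the $\mathcal{F}$-locale with frame $p^*(\Omega A)$ is ``canonically isomorphic to $R(A)$'', but that identification is precisely the substance of the theorem. It amounts to showing that $R$ intertwines the locale-to-frame passage $A \mapsto \mathbb{S}^A$, and this is a consequence of Frobenius reciprocity in the form of Lemma~\ref{Frob} (the isomorphism $\mathbb{S}^X \circ L \cong (R\mathbb{S})^{RX}$), not of $R$ merely preserving finite limits and the object $\mathbb{S}$. In \cite{towgeom} it is this contravariant extension of $L \dashv R$ to frames that carries the load, supplying both the reconstruction of $R$ from $R|_{\mathcal{E}}$ and the direct-image data, with no appeal to an adjoint functor theorem.
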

\begin{proof}
This is the main result of \cite{towgeom}.
\end{proof}
If $\mathcal{F} \rTo \mathcal{E}$ is a geometric morphism then we use $\Sigma_f \dashv f^*$ for the corresponding adjunction between categories of locales. We are now in a position to give our example.

\begin{example}
It is not the case that every category of spaces arises as the category of locales for some elementary topos. Let $G$ be a localic group, and say  $\psi: [ G , \mathbf{Loc}]  \rTo^{\simeq} \mathbf{Loc}_{\mathcal{E}}$ for some elelmentary topos $\mathcal{E}$ (such that the equivalence sends the Sierpi\'{n}ski locale relative to $\mathcal{E}$ to the canonical Sierpi\'{n}ski object of $[ G , \mathbf{Loc}]$). It follows that the discrete objects of $ \mathbf{Loc}_{\mathcal{E}}$ can be identified with the discrete objects of $[ G , \mathbf{Loc}]$; but these last are $BG$. It follows that $BG \simeq \mathcal{E}$ and therefore that there is an equivalence $\phi: \mathbf{Loc}_{\mathcal{E}} \rTo^{\simeq} \mathbf{Loc}_{BG}$. So there is an adjunction 
\begin{eqnarray*}
\mathbf{Loc} \pile{\rTo^{G \times (\_)}  \\ \lTo_U} [ G , \mathbf{Loc}] \pile{ \rTo^{\psi} \\ \lTo_{\psi^{-1}}} \mathbf{Loc}_{\mathcal{E}} \pile{ \rTo^{\phi} \\ \lTo_{\phi^{-1}}} \mathbf{Loc}_{BG}
\end{eqnarray*}
which satisfies Frobenius reciprocity and whose right adjoint preverse the Sierpi\'{n}ski locale. Further the restriction of the right adjoint of this adjunction to discrete locales is the forgetful functor and so by the last proposition this adjunction must be isomorphic to the adjunction $\Sigma_{p_G} \dashv p_G^*$ determined by the canonical point  $p_G : \mathbf{Set} \rTo BG$ of $BG$.

For any open localic group we know that the geometric morphism $p_G : \mathbf{Set} \rTo BG$ is an open surjection (see Lemma C5.3.6 of \cite{Elephant} and the comments before it). But locales decend along open surjections (Theorem C5.1.5 of \cite{Elephant}) and the definition of locales descending along $p_G$ is that the functor $\rho: \mathbf{Loc}_{BG} \rTo $$ [ \hat{G} , \mathbf{Loc}]$, induced by $p_G^* : \mathbf{Loc}_{BG} \rTo \mathbf{Loc}$ (i.e. $U\rho=p_G^*$), is an equivalence, where $\hat{G}$ is the \'{e}tale completion of $G$ (see e.g. Lemma C5.3.16 of \cite{Elephant} for a bit more detail). Therefore there exists an equivalence of categories $[ G , \mathbf{Loc} ] \simeq [ \hat{G} , \mathbf{Loc}]$ which commutes with the canonical adjunction back to $\mathbf{Loc}$. This is sufficient to show that $G \cong \hat{G}$ (see the comments before Lemma \ref{Frob}); i.e. that $G$ is \'{e}tale complete. Since not every open localic group is \'{e}tale complete, it is not the case that every category of spaces is a category of locales over some topos.
\end{example}

\section{Making do without coequalizers}
\subsection{Making do: inside $\mathcal{C}$}
An achievement of the axiomatic approach to locale theory is that it covers Plewe's result that localic triquotient surjections are effective descent morphisms (which generalises the more well known results that localic proper and open surjections are effective descent morphisms). To prove the result one needs to show that triquotient surjections are regular epimorphisms and, on the surface, this appears to require some coequalizers of the ambient category $\mathcal{C}$. We now show how to avoid this requirement.

\begin{definition}
Given a morphism $p: Z \rTo Y$ in a category of spaces, a \emph{triquotient assignment on $p$} is a natural trasnformation $p_{\#}: \mathbb{S}^Z \rTo \mathbb{S}^Y$ satisfying 

(i) $\sqcap_{\mathbb{S}^Y}( p_{\#} \times  Id_{\mathbb{S}^Y}) \sqsubseteq  p_{\#}\sqcap_{\mathbb{S}^Z} (Id_{\mathbb{S}^Z} \times \mathbb{S}^p  )$ and 

(ii)  $  p_{\#}\sqcup_{\mathbb{S}^Z} (Id_{\mathbb{S}^Z} \times \mathbb{S}^p  )\sqsubseteq \sqcup_{\mathbb{S}^Y}( p_{\#} \times  Id_{\mathbb{S}^Y})$.

Further $p$ is a \emph{triquotient surjection} if it has a triquotient assigment $p_{\#}$ such that $p_{\#}\mathbb{S}^p = Id_{\mathbb{S}^Y}$.
\end{definition}

Consult \cite{towaxioms} for more detail on triquotient assignments and the role they play in the axiomatic aporoach. In particular note that the usual `Beck-Chevalley for pullback squares' result holds: if $p_{\#}$ is a triquotient assignment on $p:Z \rTo Y$ then for any $f: X \rTo Y$ there is a triquotient assignment $(\pi_1)_{_\#}$ on $\pi_1  :X \times_Y Z \rTo X$ such that $(\pi_1)_{_\#}\mathbb{S}^{\pi_2} = \mathbb{S}^f p_{\#}$. Notice that if $p:Z \rTo Y$ is a triquotient surjection witnessed by the triquotient assignment $p_{\#}  : \mathbb{S}^Z \rTo \mathbb{S}^Y$, then $p_{\#}(1)=1$ and $p_{\#}(0)=0$. Conversely if $p: Z \rTo Y$ has a triquotient assignment $p_{\#}$ with $p_{\#}(1)=1$ and $p_{\#}(0)=0$ then $p_{\#}(\mathbb{S}^p(b))=p_{\#}(0 \sqcup \mathbb{S}^p(b))\sqsubseteq p_{\#}(0) \sqcup b = b$ and order dually $b \sqsubseteq p_{\#}(\mathbb{S}^p(b))$ and so $p$ is a triquotient surjection. Using this charcterization of triquotient surjection it is clear from Beck-Chevalley for pullback squares that triquotient surjections are pullback stable. We now prove that triquotient surjections are regular epimorphisms.
\begin{proposition}
If $\mathcal{C}$ is a category of spaces and $p: Z \rTo Y$ a triquotient surjection then $p$ is a regular epimorphism.
\end{proposition}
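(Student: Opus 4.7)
The plan is to exhibit $p$ as the coequalizer of its kernel pair $\pi_1, \pi_2 : Z \times_Y Z \rTo Z$ by constructing the universal factorisation explicitly using $p_\#$ and then invoking Axiom 5. The key identity underpinning everything is that for any $f : Z \rTo W$ with $f\pi_1 = f\pi_2$,
\begin{eqnarray*}
\mathbb{S}^p p_\# \mathbb{S}^f = \mathbb{S}^f.
\end{eqnarray*}
Beck--Chevalley applied to the kernel pair pullback produces a triquotient assignment $(\pi_1)_\#$ on $\pi_1$ satisfying $(\pi_1)_\# \mathbb{S}^{\pi_2} = \mathbb{S}^p p_\#$; since triquotient surjections are pullback stable, $\pi_1$ is itself a triquotient surjection and hence $(\pi_1)_\# \mathbb{S}^{\pi_1} = Id$. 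The identity then follows from the chain $\mathbb{S}^p p_\# \mathbb{S}^f = (\pi_1)_\# \mathbb{S}^{\pi_2} \mathbb{S}^f = (\pi_1)_\# \mathbb{S}^{f\pi_2} = (\pi_1)_\# \mathbb{S}^{f\pi_1} = \mathbb{S}^f$.

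Given this, I would set $\alpha := p_\# \mathbb{S}^f : \mathbb{S}^W \rTo \mathbb{S}^Y$ and aim to show that $\alpha$ is a distributive lattice homomorphism. Axiom 5 then supplies a unique $g : Y \rTo W$ with $\mathbb{S}^g = \alpha$, whereupon the key identity gives $\mathbb{S}^{gp} = \mathbb{S}^p \alpha = \mathbb{S}^f$, and a second appeal to Axiom 5 yields $gp = f$. Uniqueness of $g$ is automatic because $p_\# \mathbb{S}^p = Id$ exhibits $\mathbb{S}^p$ as a split monomorphism, so that $p$ is an epimorphism.

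The main obstacle is the lattice-homomorphism check, which is where the Frobenius-type conditions (i) and (ii) on $p_\#$ come into play in an essential way. Preservation of top and bottom reduces to $p_\#(1) = 1$ and $p_\#(0) = 0$, both noted just before the statement. For binary meets, one direction is monotonicity of $\alpha$; the other comes from applying (i) in the form $p_\#(u) \sqcap v \sqsubseteq p_\#(u \sqcap \mathbb{S}^p(v))$ with $u = \mathbb{S}^f(a)$ and $v = p_\# \mathbb{S}^f(b)$, then collapsing $\mathbb{S}^p p_\# \mathbb{S}^f(b) = \mathbb{S}^f(b)$ via the key identity to land in $\alpha(a \sqcap b)$. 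Binary joins are handled dually using (ii). Since $\mathbb{S}$ is distributive this suffices to make $\alpha$ a distributive lattice homomorphism, and the proof concludes.
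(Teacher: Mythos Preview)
Your proof is correct and follows essentially the same route as the paper: both define $\alpha = p_\# \mathbb{S}^f$, use Beck--Chevalley on the kernel-pair pullback together with the triquotient-surjection property of the pulled-back map to collapse $\mathbb{S}^p p_\# \mathbb{S}^f$ to $\mathbb{S}^f$, and then verify the distributive-lattice conditions via (i) and (ii) before invoking Axiom~5. The only difference is organisational: you isolate the identity $\mathbb{S}^p p_\# \mathbb{S}^f = \mathbb{S}^f$ up front, whereas the paper packages the same computation as a split fork in $\mathcal{C}_{\mathbb{P}}^{op}$ and unwinds it inline during the meet calculation.
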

\begin{proof}
Let $p_1,p_2: Z \times_Y Z \pile{\rTo \\ \rTo} Z$ be the kernal pair of $p$. The diagram 
\begin{diagram}
\mathbb{S}^Y &  \pile{\rTo^{\mathbb{S}^p} \\ \lTo_{p_{\#}}} & \mathbb{S}^Z & \pile{ \rTo^{\mathbb{S}^{p_2}} \\  \rTo^{\mathbb{S}^{p_1} } \\ \lTo_{(p_1)_{\#}}} & \mathbb{S}^{Z \times_Y Z }\\
\end{diagram}
is a split fork in $\mathcal{C}_{\mathbb{P}}^{op}$. For any $q: Z \rTo W$ with $qp_1=qp_2$ we therefore have that $\mathbb{S}^q$ factors (uniquely) as $\mathbb{S}^p \alpha $ for some natural transformation $\alpha$ (it is given by $p_{\#}\mathbb{S}^q$). By Axiom 5 it therefore only remains to check that $\alpha$ is a distributive lattice homomorphism. Since we have already observed $p_{\#}$ preserves $0$ and $1$ we just need to show that $\alpha$ preserves binary meet and join, and for this it is sufficient to check $p_{\#}\mathbb{S}^q(c_1) \sqcap p_{\#}\mathbb{S}^q(c_2) \sqsubseteq p_{\#}\mathbb{S}^q(c_1 \sqcap c_2)$ and $  p_{\#}\mathbb{S}^q(c_1 \sqcup c_2) \sqsubseteq p_{\#}\mathbb{S}^q(c_1) \sqcup p_{\#}\mathbb{S}^q(c_2)$. 
But 
\begin{eqnarray*}
p_{\#}\mathbb{S}^q(c_1) \sqcap p_{\#}\mathbb{S}^q(c_2) & \sqsubseteq & p_{\#}(\mathbb{S}^p c_1 \sqcap  \mathbb{S}^p p_{\#} \mathbb{S}^q c_2) \\
& = & p_{\#}(\mathbb{S}^q c_1 \sqcap (p_1)_{\#} \mathbb{S}^{p_2} \mathbb{S}^q c_2) \text{ (Beck-Chevalley)}\\
& = & p_{\#}(\mathbb{S}^q c_1 \sqcap (p_1)_{\#} \mathbb{S}^{p_1} \mathbb{S}^q c_2) \text{ (since $qp_1=qp_2$)} \\
& = & p_{\#}(\mathbb{S}^q c_1 \sqcap \mathbb{S}^q c_2) \text{ ($p_1$ triquotient surjection)}\\
& = & p_{\#}\mathbb{S}^q ( c_1 \sqcap c_2)\\
\end{eqnarray*}
and $  p_{\#}\mathbb{S}^q(c_1 \sqcup c_2) \sqsubseteq p_{\#}\mathbb{S}^q(c_1) \sqcup p_{\#}\mathbb{S}^q(c_2)$ follows an order dual proof and so we are done.
\end{proof}
Further details on the axiomatic proof that triquotient surjections are of effective decent are contained in \cite{towaxiomsdraft}.

\subsection{Making do: maps between $\mathcal{C}$s}
If $G$ is an internal group in a category of spaces we have established that $[G,\mathcal{C}]$ is a category of spaces. Since we have also recalled in Proposition \ref{geommorph} that geometric morphisms can be represented as certain adjunctions between categories of spaces it would be odd if there was not a natural `connected components' adjunction $\Sigma_G \dashv G^*$; i.e.
\begin{eqnarray*}
[G,\mathcal{C}] \pile{\rTo^{\Sigma_G} \\ \lTo_{G^*} } \mathcal{C}
\end{eqnarray*}
where $G^*$ sends an object $X$ of $\mathcal{C}$ to the trivial $G$-object $(X,\pi_2)$. But for $\Sigma_G$ to exist it would appear that coequalizers are required, since $\Sigma_G(X,a)$ must (by uniqueness of left adjoints) be isomorphic to the coequalizer of $a$ and $\pi_2$. We now show, for open groups at least, that in fact $\Sigma_G$ can always be defined. (Order dually, $\Sigma_G$ will always exist for compact groups.) The proof does not require $G$ to be a group, only a monoid, but it is not clear what this extra level of generality offers us. To prove this result we need three lemmas, the first of which is a simple order enriched result:
\begin{lemma}\label{hannahs}
If
\begin{eqnarray*}
C \rTo^c A \pile{\rTo^a \\ \rTo_b } B
\end{eqnarray*}
is a fork diagram in an order enriched category $\mathcal{C}$ (i.e. $ac=bc$), then if there exists $q:A\rTo C$ and $t:B\rTo A$ such that $ta=cq \sqsupseteq Id_A$, $qc=Id_C$ and $tb \sqsubseteq Id_A$, then $c$ is the equalizer of $a$ and $b$. 
\end{lemma}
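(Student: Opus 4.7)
The plan is to show that $c$ is mono, exhibit the unique factorisation of an arbitrary equaliser cone through $c$, and then upgrade the bijection to an order isomorphism to get the order enriched universal property. The key trick is that the composite $cq: A \rTo A$ is an inflationary retraction onto the ``image'' of $c$, while $tb: A \rTo A$ is deflationary; when these two are forced to agree on a morphism that equalises $a$ and $b$, we get equality with the identity on that morphism.

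\textbf{Step 1 (monic).} From $qc = Id_C$ it is immediate that $c$ is a split monomorphism, and in particular mono, giving uniqueness of any factorisation through $c$.

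\textbf{Step 2 (factorisation).} Suppose $f : D \rTo A$ satisfies $af = bf$. I would set $h := qf : D \rTo C$ and verify $ch = f$ by the chain
\begin{eqnarray*}
ch \;=\; cqf \;=\; taf \;=\; tbf \;\sqsubseteq\; f
\end{eqnarray*}
where the second equality uses $cq = ta$, the third uses $af=bf$, and the inequality uses $tb \sqsubseteq Id_A$ together with monotonicity of postcomposition. On the other hand $cq \sqsupseteq Id_A$ gives $cqf \sqsupseteq f$ by the same monotonicity. Hence $ch = f$. Uniqueness of $h$ is forced by Step 1: if $ch' = f$ too, then $h' = qch' = qf = h$.

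\textbf{Step 3 (order enriched universal property).} To see that $c$ is an \emph{order enriched} equaliser, one has to check that the bijection
\begin{eqnarray*}
\mathcal{C}(D, C) \;\longrightarrow\; \{ f : D \rTo A \mid af = bf \}, \qquad h \;\longmapsto\; ch,
\end{eqnarray*}
is an order isomorphism for every $D$. Forward monotonicity is automatic since composition is monotone. For the reverse direction, if $f_1 \sqsubseteq f_2$ both equalise $a$ and $b$, then their factorisations are $h_i = qf_i$ by Step 2, and $h_1 = qf_1 \sqsubseteq qf_2 = h_2$ by monotonicity of composition with $q$. This is the only place where the order enrichment is actually used; the argument is uniform.

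The main obstacle, such as it is, is really just keeping the direction of the two inequalities straight: $cq$ being \emph{inflationary} and $tb$ being \emph{deflationary} are exactly what is needed to sandwich $cqf$ between $f$ and itself once $f$ has been assumed to equalise the parallel pair. Everything else is bookkeeping, so I expect the proof to be short.
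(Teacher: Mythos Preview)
Your proof is correct and is essentially the paper's own argument, just spelled out in more detail: the paper takes $d:D\rTo A$ with $ad=bd$, observes $cqd\sqsupseteq d$ and $cqd=tad=tbd\sqsubseteq d$, concludes $cqd=d$, notes uniqueness since $c$ is split, and then remarks that the order isomorphism follows from monotonicity of composition --- exactly your Steps~1--3.
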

The result that  $c$  is an equalizer is similar to the familiar result that split forks are coequalizers, used in Beck's monadicity theorem. An order enriched monadicity theorem can be written down, based on this result.
\begin{proof}
Say $d:D \rTo A$ has $ad=bd$. Then $cqd \sqsupseteq d$ and $cqd = tad = tbd \sqsubseteq d$; so, $cqd=d$ showing that $d$ factors via $c$, clearly uniquely as $c$ is split. This establishes an order isomorphism as the action of morphism composition preserves order. 
\end{proof}

For the next lemma observe that if $p:Z \rTo X$ is an open map with a section $s: X \rTo Z$ (i.e. $ps=Id_X$) then $\mathbb{S}^p \sqsubseteq \exists_p$. This is trivial to establish because $Id_{\mathbb{S}^Z} \sqsubseteq \mathbb{S}^p \exists_p$ since $\mathbb{S}^p$ is right adjoint to $\exists_p$. In particular we observe that for any open object $Y$, arbitrary object $X$, and map $g:X \rTo Y$, we have that $\mathbb{S}^{(g,Id_X)} \sqsubseteq \exists_{\pi_2}$, where $\pi_2 : Y \times X \rTo X$, which is open because it is the pullback of the open map $!:Y \rTo 1$. Our next lemma builds on this last observation.
\begin{lemma}
If $g: Z_1 \rTo Z_2$ is a map between two open objects and $X$ is some other object of $\mathcal{C}$, then $\exists_{\pi_2^{Z_1}}\mathbb{S}^{g \times Id_X} \sqsubseteq \exists_{\pi_2^{Z_2}}$ (where $\pi_2^{Z_1}: Z_1 \times X \rTo X$ and $\pi_2^{Z_2}: Z_2 \times X \rTo X$).
\end{lemma}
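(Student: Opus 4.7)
The plan is to reduce the statement to the observation preceding the lemma by factoring $g \times Id_X$ through the graph of $g$. Let $\sigma := (g\pi_1, Id_{Z_1 \times X}) : Z_1 \times X \rTo Z_2 \times Z_1 \times X$, and write $\pi_{13}$ and $\pi_{23}$ for the projections from $Z_2 \times Z_1 \times X$ onto $Z_2 \times X$ and $Z_1 \times X$ respectively. I would then verify three easy facts: (i) $\sigma$ is a section of $\pi_{23}$; (ii) $g \times Id_X = \pi_{13}\sigma$; and (iii) both $\pi_{13}$ and $\pi_{23}$ are open, being pullbacks of $!: Z_1 \rTo 1$ and $!: Z_2 \rTo 1$, which are open by the hypothesis that $Z_1$ and $Z_2$ are open objects.

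Applying the observation preceding the lemma to the section $\sigma$ of the open map $\pi_{23}$ (taking the `open object' there to be $Z_2$ and the `arbitrary object' to be $Z_1 \times X$, with the map $g\pi_1 : Z_1 \times X \rTo Z_2$) yields $\mathbb{S}^\sigma \sqsubseteq \exists_{\pi_{23}}$. Using (ii) to expand $\mathbb{S}^{g \times Id_X} = \mathbb{S}^\sigma \mathbb{S}^{\pi_{13}}$, and composing with $\exists_{\pi_2^{Z_1}}$ on the left, produces
\begin{eqnarray*}
\exists_{\pi_2^{Z_1}} \mathbb{S}^{g \times Id_X} & \sqsubseteq & \exists_{\pi_2^{Z_1}} \exists_{\pi_{23}} \mathbb{S}^{\pi_{13}}\text{.}
\end{eqnarray*}
To finish, I would invoke functoriality of $\exists$ along the two factorisations $\pi_2^{Z_1}\pi_{23} = \pi_3 = \pi_2^{Z_2}\pi_{13}$ of the triple projection $\pi_3 : Z_2 \times Z_1 \times X \rTo X$ to rewrite the right-hand side as $\exists_{\pi_2^{Z_2}} \exists_{\pi_{13}} \mathbb{S}^{\pi_{13}}$, and then apply the counit $\exists_{\pi_{13}} \mathbb{S}^{\pi_{13}} \sqsubseteq Id$ (available by (iii)) to bound this above by $\exists_{\pi_2^{Z_2}}$, which is exactly the required inequality.

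There is no real obstacle once the graph-factorisation is spotted; the remainder is bookkeeping with the $\exists \dashv \mathbb{S}$ adjunctions and functoriality of $\exists$ along composites of open maps. The two openness hypotheses are used independently: openness of $Z_2$ to invoke the preceding observation on $\sigma$, and openness of $Z_1$ to guarantee that $\exists_{\pi_{13}}$ exists so that the final counit step makes sense.
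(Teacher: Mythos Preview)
Your proof is correct and uses exactly the same core idea as the paper: factor $g\times Id_X$ through the graph $\sigma=(g\pi_1,Id_{Z_1\times X})$ and invoke the observation in the preamble to obtain $\mathbb{S}^{\sigma}\sqsubseteq\exists_{\pi_{23}}$. The only cosmetic difference is in the final bookkeeping: the paper first transposes the desired inequality across $\exists_{\pi_2^{Z_1}}\dashv\mathbb{S}^{\pi_2^{Z_1}}$ and then applies Beck--Chevalley to the pullback square to rewrite $\mathbb{S}^{\pi_2^{Z_1}}\exists_{\pi_2^{Z_2}}$ as $\exists_{\pi_{23}}\mathbb{S}^{\pi_{13}}$, whereas you stay on the original side and use $\exists_{\pi_2^{Z_1}}\exists_{\pi_{23}}=\exists_{\pi_2^{Z_2}}\exists_{\pi_{13}}$ together with the counit $\exists_{\pi_{13}}\mathbb{S}^{\pi_{13}}\sqsubseteq Id$. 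Both routes are standard adjunction manipulations and neither offers a genuine advantage over the other.
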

\begin{proof}
As $\exists_{\pi_2^{Z_1}}$ is left adjoint to $\mathbb{S}^{\pi_2^{Z_1}}$, the proof can be completed by showing $\mathbb{S}^{g \times Id_X} \sqsubseteq \mathbb{S}^{\pi_2^{Z_1}}\exists_{\pi_2^{Z_2}}$, which is equivalent to showing $\mathbb{S}^{g \times Id_X} \sqsubseteq \exists_{\pi_{23}} \mathbb{S}^{\pi_{13}}$ by Beck-Chevalley on the pullback square
\begin{diagram}
Z_2 \times Z_1 \times X & \rTo^{\pi_{13}} & Z_2 \times X \\
\dTo^{\pi_{23}} &  &  \dTo_{\pi_2^{Z_2}} \\
Z_1 \times X  & \rTo_{\pi_2^{Z_1}} & X \\
\end{diagram}
But the proof is then complete by our observations in the preamble because $\mathbb{S}^{g \times Id_X}$ factors as $\mathbb{S}^{(g,Id_{Z_1 \times X})}\mathbb{S}^{\pi_{23}}$.
\end{proof}
This leads us to our first result about open monoids; that is, on monoid objects $(M,m:M\times M \rTo M,e: 1 \rTo M)$, internal to $\mathcal{C}$, such that $M$ is open.
\begin{lemma}
If $M$ is an open monoid then for any $M$-object $(X,a:M \times X \rTo X)$, $\mathbb{S}^X\rTo^{\mathbb{S}^a}\mathbb{S}^{M \times X} \rTo^{\exists_{\pi_2}}\mathbb{S}^X$ is (a) inflationary and (b) idempotent.
\end{lemma}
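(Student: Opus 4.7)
My plan is to handle (a) directly from the unit law of the action, and to handle (b) by applying Beck--Chevalley to an appropriate pullback, rewriting via associativity, and then invoking the preceding lemma on the multiplication $\mu: M \times M \rTo M$.

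For (a), the unit law for the action says $a \circ (e \times Id_X) = Id_X$ (after the canonical isomorphism $1 \times X \cong X$), so $e_X := (e \circ {!_X}, Id_X): X \rTo M \times X$ is simultaneously a section of $a$ and of $\pi_2$. Because $M$ is open, $\pi_2: M \times X \rTo X$ is open (being the pullback of ${!}: M \rTo 1$), and the observation in the preamble to the preceding lemma gives $\mathbb{S}^{e_X} \sqsubseteq \exists_{\pi_2}$. Postcomposing with $\mathbb{S}^a$ and using $\mathbb{S}^{e_X}\mathbb{S}^a = \mathbb{S}^{a \circ e_X} = Id_{\mathbb{S}^X}$ then yields $Id_{\mathbb{S}^X} \sqsubseteq \exists_{\pi_2}\mathbb{S}^a$, which is (a).

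For (b), write $\psi := \exists_{\pi_2}\mathbb{S}^a$. Part (a) immediately gives $\psi \sqsubseteq \psi \circ \psi$, so the real content is $\psi \circ \psi \sqsubseteq \psi$. I would first apply Beck--Chevalley on the pullback of $\pi_2: M \times X \rTo X$ along $a: M \times X \rTo X$. That pullback is $M \times X \times M$, with projection $\pi_{12}$ going down to the $a$-side and the map $r: (m_1, x_1, m_2) \mapsto (m_2, a(m_1, x_1))$ going across to the $\pi_2$-side; since $\pi_2$ is open, BC gives $\mathbb{S}^a \exists_{\pi_2} = \exists_{\pi_{12}} \mathbb{S}^r$, and hence $\psi \circ \psi = \exists_{\pi_2 \circ \pi_{12}}\mathbb{S}^{a \circ r}$. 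Associativity of the action rewrites $a \circ r$ as $a \circ \tilde{\mu}$, where $\tilde{\mu}(m_1, x_1, m_2) = (m_2 m_1, x_1)$; equivalently $\tilde{\mu} = (\mu \times Id_X) \circ \sigma$ for the permutation iso $\sigma(m_1, x_1, m_2) = (m_2, m_1, x_1)$. Since $\sigma$ is an iso, $\exists_{\sigma}\mathbb{S}^{\sigma} = Id$, and $\pi_2 \circ \pi_{12}$ agrees with $\pi_2^{M \times M} \circ \sigma$, so $\exists_{\pi_2 \circ \pi_{12}}\mathbb{S}^{\tilde{\mu}} = \exists_{\pi_2^{M \times M}}\mathbb{S}^{\mu \times Id_X}$. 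The preceding lemma, applied to $\mu: M \times M \rTo M$ between the open objects $M \times M$ and $M$, bounds this above by $\exists_{\pi_2^M}$. Postcomposing with $\mathbb{S}^a$ gives $\psi \circ \psi \sqsubseteq \exists_{\pi_2^M}\mathbb{S}^a = \psi$.

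The main obstacle I expect is identifying the correct pullback square and seeing that associativity of the action is exactly what converts the Beck--Chevalley output $\mathbb{S}^{a \circ r}$ into a shape $\mathbb{S}^{a \circ \tilde{\mu}}$ where $\tilde{\mu}$ factors (up to a permutation iso) as $\mu \times Id_X$, so that the preceding lemma applies. Once that reduction is spotted, the rest is bookkeeping with pullback-adjoint identities and the contravariance of $\mathbb{S}^{(-)}$.
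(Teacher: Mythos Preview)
Your argument is correct and follows the same route as the paper: part (a) via the unit section $(e!,Id_X)$ and the observation $\mathbb{S}^{(e!,Id_X)}\sqsubseteq \exists_{\pi_2}$, and part (b) via Beck--Chevalley on the pullback of $\pi_2$ along $a$, associativity of the action, and the preceding lemma with $g=m$. The only difference is cosmetic: the paper parametrizes the pullback as $M\times M\times X$ with legs $\pi_{23}$ and $Id_M\times a$, so that associativity $a(Id_M\times a)=a(m\times Id_X)$ applies directly and your permutation isomorphism $\sigma$ never appears; your $M\times X\times M$ is the same square conjugated by $\sigma$, and you handle that bookkeeping correctly.
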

\begin{proof}
(a) Immediate because $\exists_{\pi_2}$ is greater than $\mathbb{S}^{(e!,Id_X)}$ and $Id_{\mathbb{S}^X}$ factors as $\mathbb{S}^{(e!,Id_X)}\mathbb{S}^a$.

(b) By Beck-Chevalley on the pullback square

\begin{diagram}
M \times M \times X &  \rTo^{Id_M \times a} & M \times X \\
\dTo^{\pi_{23}}  &    &  \dTo_{\pi_2} \\
M \times X & \rTo^a   & X  \\
\end{diagram}
and using $a(Id_M \times a) = a (m \times Id_X)$ we have 
\begin{eqnarray*}
\exists_{\pi_2} \mathbb{S}^a \exists_{\pi_2} \mathbb{S}^a & = & \exists_{\pi_2} \exists_{\pi_{23}} \mathbb{S}^{Id_M
 \times a}  \mathbb{S}^a  \\
& = & \exists_{\pi_2} \exists_{\pi_{23}} \mathbb{S}^{m \times Id_X}  \mathbb{S}^a  \\
& \sqsubseteq & \exists_{\pi_2} \mathbb{S}^a
\end{eqnarray*}
where the last line is by the lemma (take $g=m$). This completes the proof of (b), given that (a) shows that $\exists_{\pi_2} \mathbb{S}^a$ is inflationary.

\end{proof}

The next result establishes the aim of this subsection. 
\begin{proposition}\label{nathan}
If $M$ is an open monoid then $M^*: \mathcal{C} \rTo $$ [ M , \mathcal{C}]$ has a left adjoint, $\Sigma_M $. 
\end{proposition}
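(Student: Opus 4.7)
The plan is to construct $\Sigma_M(X,a)$ by splitting, via Axiom 6(i), the inflationary idempotent $\phi := \exists_{\pi_2}\mathbb{S}^a : \mathbb{S}^X \rTo \mathbb{S}^X$ supplied by the previous lemma. Because $\exists_{\pi_2}$ is a left adjoint and $\mathbb{S}^a$ is a frame homomorphism, $\phi$ preserves joins, so it is an endomorphism of $X$ in $\mathcal{C}^{op}_{P_L}$; Axiom 6(i) then produces an object $X_0 \in \mathcal{C}$ together with morphisms $\theta : \mathbb{S}^{X_0} \rTo \mathbb{S}^X$ and $\gamma : \mathbb{S}^X \rTo \mathbb{S}^{X_0}$ in $\mathcal{C}^{op}_{P_L}$ satisfying $\theta\gamma = \phi$ and $\gamma\theta = Id$. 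By Lemma~\ref{meet} (together with its ``further'' clause), $\theta$ is a meet semilattice homomorphism preserving the top, so combined with being join-preserving it is a distributive lattice homomorphism; Axiom 5 therefore identifies $\theta$ with $\mathbb{S}^q$ for a unique $q:X \rTo X_0$. The candidate is $\Sigma_M(X,a) := X_0$, with $q$ as the unit of the adjunction.

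The main step is to verify that $q$ is an $M$-homomorphism $(X,a) \rTo M^*(X_0)$, i.e., $qa = q\pi_2$, which by Axiom 5 reduces to showing $\mathbb{S}^a\theta = \mathbb{S}^{\pi_2}\theta$. This is the fork condition of Lemma~\ref{hannahs}; its other hypotheses are immediate with $t := \exists_{\pi_2}$, since $\exists_{\pi_2}\mathbb{S}^a = \phi = \theta\gamma \sqsupseteq Id$, $\gamma\theta = Id$, and $\exists_{\pi_2}\mathbb{S}^{\pi_2} \sqsubseteq Id$ by the adjunction counit (in fact equal to $Id$ for open $M$, by Frobenius on $\pi_2$). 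Transposing $\phi\theta = \theta$ under $\exists_{\pi_2} \dashv \mathbb{S}^{\pi_2}$ gives one inclusion, $\mathbb{S}^a\theta \sqsubseteq \mathbb{S}^{\pi_2}\theta$, for free. The reverse inclusion is the delicate point, and is where the monoid structure is genuinely used: by re-running the idempotence proof of the preceding lemma with the roles of $a$ and $\pi_2$ swapped (possible because $(e!,Id_X)$ sections both $a$ and $\pi_2$, so $a$ admits a left adjoint $\exists_a$), one obtains the dual identity $\exists_a\mathbb{S}^{\pi_2}\theta = \theta$, which transposes under $\exists_a \dashv \mathbb{S}^a$ to yield $\mathbb{S}^{\pi_2}\theta \sqsubseteq \mathbb{S}^a\theta$. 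This step is the main technical obstacle of the proof.

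With $qa = q\pi_2$ in hand, Lemma~\ref{hannahs} promotes $\theta$ to an order-enriched equalizer of $\mathbb{S}^a$ and $\mathbb{S}^{\pi_2}$. Universality of $q$ then follows by a now-routine Axiom 5 argument: given any $f : X \rTo Y$ with $fa = f\pi_2$, applying $\exists_{\pi_2}$ to $\mathbb{S}^a\mathbb{S}^f = \mathbb{S}^{\pi_2}\mathbb{S}^f$ and using $\exists_{\pi_2}\mathbb{S}^{\pi_2} = Id$ gives $\phi\mathbb{S}^f = \mathbb{S}^f$, so $\mathbb{S}^f = \theta\alpha$ for $\alpha := \gamma\mathbb{S}^f$. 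Since $\mathbb{S}^f$ is a distributive lattice homomorphism and $\theta$ is a split monic distributive lattice homomorphism, $\alpha$ is itself a distributive lattice homomorphism (by the same meet/top argument used in the $G$-stability proof of Axiom 6); Axiom 5 then produces a unique $\bar f : X_0 \rTo Y$ with $\alpha = \mathbb{S}^{\bar f}$, from which $\bar f q = f$ follows by the uniqueness clause of Axiom 5. Naturality of this bijection in $Y$ delivers the adjunction $\Sigma_M \dashv M^*$, with $\Sigma_M(X,a) = X_0$.
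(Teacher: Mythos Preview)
Your overall strategy matches the paper's: split the inflationary idempotent $\phi=\exists_{\pi_2}\mathbb{S}^a$ via Axiom 6(i), identify the inclusion with $\mathbb{S}^{q}$ via Lemma~\ref{meet} and Axiom~5, invoke Lemma~\ref{hannahs} to get the equalizer/coequalizer property, and then check that the factoring map $\gamma\mathbb{S}^f$ is a distributive lattice homomorphism so that Axiom~5 applies. Your final paragraph is somewhat terse (the meet-preservation check is not literally ``the same'' as in the Axiom~6 stability proof --- it uses $fa=f\pi_2$ and $\theta\gamma\sqsupseteq Id$ in the way the paper spells out), but that is easily filled in.

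There is, however, a genuine gap in your verification of the fork condition $qa=q\pi_2$. You write that $(e!,Id_X)$ sections both $a$ and $\pi_2$, ``so $a$ admits a left adjoint $\exists_a$''. This inference is invalid: a section of $a$ gives $\mathbb{S}^a$ a \emph{left inverse} $\mathbb{S}^{(e!,Id_X)}$, not a \emph{left adjoint}. Openness of $a$ means $\mathbb{S}^a$ has a left adjoint satisfying Frobenius, which does not follow from being split epic. For a \emph{group} the map $a$ is open, but for the right reason: the shear map $(g,x)\mapsto(g,a(g,x))$ is an isomorphism and $a$ factors as $\pi_2$ composed with it, whence $\exists_a$ exists and in fact $\exists_a\mathbb{S}^{\pi_2}=\exists_{\pi_2}\mathbb{S}^a=\phi$, making your ``swap'' argument go through immediately. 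For a general monoid there is no shear isomorphism, $a$ need not be open, and indeed the fork condition can fail outright (e.g.\ $M=\mathbb{N}$ acting on $X=\mathbb{N}$ by addition: the $\phi$-fixed open $v=\{0\}$ has $a^{-1}(v)=\{(0,0)\}\neq\mathbb{N}\times\{0\}=\pi_2^{-1}(v)$). So your argument, as written, does not establish the reverse inequality $\mathbb{S}^{\pi_2}\theta\sqsubseteq\mathbb{S}^a\theta$; you should either restrict to groups and invoke the shear isomorphism explicitly, or supply an argument that does not rely on $\exists_a$.
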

\begin{proof}
For any $M$-object $(X,a)$ consider the map $\exists_{\pi_2}\mathbb{S}^a$, which we have established is an inflationary idempotent. So by applying by Axioms 5 and 6 we have a diagram 
\begin{eqnarray*}
\mathbb{S}^{\Sigma_M(X,a)} \pile{ \rInto^{\mathbb{S}^{q^X}} \\ \lOnto_{\tau} } \mathbb{S}^X \pile{ \rTo^{\mathbb{S}^a} \\ \rTo^{\mathbb{S}^{\pi_2}} \\ \lTo_{\exists_a}} \mathbb{S}^{G \times X}
\end{eqnarray*}
which, by Lemma \ref{hannahs}, is an equalizer in $\mathcal{C}^{op}_{P_L}$. From this it follows that $q^X$ is the coequalizer of $a$ and $\pi_2$; if $t: X \rTo Z$ composes equally with $a$ and $\pi_2$ then $\mathbb{S}^t$ factors uniquely via $\mathbb{S}^{q^X}$ so it just remains to check, as in earlier proofs, that $\tau \mathbb{S}^t$ is a meet semilattice homomorphism. Certainly it preserves top (as $\tau$ does); the manipulation below shows that $\mathbb{S}^{q^X}( \tau \mathbb{S}^t (a_1) \sqcap \tau \mathbb{S}^t (a_2) ) \sqsubseteq \mathbb{S}^{q^X} \tau \mathbb{S}^t ( a_1 \sqcap a_2)$ from which it is clear that $\tau \mathbb{S}^t$ is a meet semilattice homomorphism (post compose the inequality with $\tau$).
\begin{eqnarray*}
\mathbb{S}^{q^X}( \tau \mathbb{S}^t (a_1) \sqcap \tau \mathbb{S}^t (a_2) ) & = & \mathbb{S}^{q^X} \tau \mathbb{S}^t (a_1) \sqcap  \mathbb{S}^{q^X} \tau \mathbb{S}^t (a_2) ) \\
& = & \exists_a \mathbb{S}^{\pi_2} \mathbb{S}^t (a_1) \sqcap  \exists_a \mathbb{S}^{\pi_2} \mathbb{S}^t (a_2) \\
& = & \exists_a \mathbb{S}^a \mathbb{S}^t (a_1) \sqcap  \exists_a \mathbb{S}^a \mathbb{S}^t (a_2) \\
& \sqsubseteq &  \mathbb{S}^t (a_1) \sqcap  \mathbb{S}^t (a_2) \\
& = &  \mathbb{S}^t (a_1 \sqcap  a_2) \\
& \sqsubseteq &  \mathbb{S}^{q^X}\tau \mathbb{S}^t (a_1 \sqcap a_2)\\
\end{eqnarray*}
\end{proof}
\section{Extending to groupoids}
In this section we outline how the above arguments extend to groupoids. We start by establishing some notation for slice categories. If $f : Y \rTo X$ is a morphism of a category $\mathcal{C}$ then we use $Y_f$ as notation for $f$ when considered as an object of the slice category $\mathcal{C}/X$. We use $Y_X$ as notation for the object $\pi_2 : Y \times X \rTo X $. Now any morphism $g:Z \rTo X$ of a cartesian category $\mathcal{C}$ gives rise to a pullback adjunction $\Sigma_g \dashv g^*: \mathcal{C}/Z \rTo \mathcal{C}/X$ that satisfies Frobenius reciprocity. So by the change of base result (Lemma \ref{Frob}) there is an adjunction, which we will write $g^{\#} \dashv g_*$, between $(\mathcal{C}/Z)_{S_Z}^{op} $ and $(\mathcal{C}/X)_{S_X}^{op} $. In the case that $X=1$ observe that for any $\delta : S^A \rTo S^B$ we have $g_*g^{\#}(\delta)=\delta^X$. 

If $\mathbb{G}=(G_1 \pile{\rTo^{d_0} \\ \rTo_{d_1} } G_0, m: G_1 \times_{G_0} G_1 \rTo G_1, s:G_0 \rTo G_1, i: G_1 \rTo G_1)$ is a groupoid relative to an order enriched cartesian category $\mathcal{C}$, with object of objects $G_0$ and object of morphisms $G_1$, then there is an adjunction $\mathcal{C}/G_0 \pile{ \rTo^{\Sigma_{d_1} d_0^*}\\ \lTo_U} [ \mathbb{G} ,\mathcal{C}]$. It satisfies Frobenius reciprocity and $U$ is monadic. A $\mathbb{G}$-object consists of $(X_f, a: \Sigma_{d_1} d_0^* X_f \rTo X_f)$ where $f: X \rTo G_0$ and $a$ is a morphism over $G_0$ that satisfies the usual unit and associative identities (the domain of $\Sigma_{d_1} d_0^* X_f$ is $G_1 \times_{G_0} X$). By taking adjoint transpose across $\Sigma_{d_1} \dashv d_1^*$ it is well known that having such an $a$ on $X_f$ is equivalent to having a morphism $a': d_0^* X_f \rTo d_1^* X_f$ of $\mathcal{C}/G_1$ such that $s^*a'$ is isomorphic to the identity and $m^*a' \cong \pi_2^*a' \nu_{X_f}\pi_1^*a'$ (where $\nu:\pi_1^*d_1^* \rTo^{\cong} \pi_2^* d_0^*$ is the canonical isomorphism). If $(X_f,a)$ and $(Y_g,b)$ are two $\mathbb{G}$-objects then a morphism $h:X_f\rTo Y_b$ of $\mathcal{C}/G_0$ is a $\mathbb{G}$-homomorphism if and only if $(d_1^*h)a'=b'(d_0^*h)$. 

For any object $S$ of $\mathcal{C}$, $S_{G_0}$ can be made into a $\mathbb{G}$-object by defining the trivial action on it: $Id_S \times d_1 : S \times G_1 \rTo S \times G_0$. This $\mathbb{G}$-object is written $\mathbb{G}^*S$.

To summarise the technical difference we have to account for when generalising from groups to groupoids, observe that the role of $\delta^G$, i.e. $!^G_* (!^G)^{\#}(\delta)$, must be taken by $({d_1})_*d_0^{\#}(\delta)$. So the exponential in the presheaf category, something that is determined by the categorical structure of $\mathcal{C}$ alone, must be replaced by an endofunctor relative to $\mathcal{C}$ that contains information about the groupoid. However, once this replacement is made it is easy to see how to make the generalisation. 

It does not appear to be possible to generalise Proposition \ref{main} from groups to groupoids. If it were possible then the property of being double exponentiable would be stable under slicing, since for any object $X$ the category of $\mathbb{G}$-objects is the same as the slice of $\mathcal{C}$ over $X$ when $\mathbb{G}$ is taken to be the trivial groupoid $X \pile{\rTo^{Id_X} \\ \rTo_{Id_X} } X$. But, see \cite{towslice}, proving slice stabity of double exponentiability appears to require something like Axiom 7 (or, at least, that the double exponentiation functor preserves coreflexive equalizers). 

\begin{proposition}
Let $S$ be a double exponentiable object in an order enriched cartesian category $\mathcal{C}$ such that double exponentiation is stable under slicing. For any internal groupoid $\mathbb{G}$, $\mathbb{G}^*S$ is a double exponentiable object of $[ \mathbb{G} ,\mathcal{C}]$. 
\end{proposition}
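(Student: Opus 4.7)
The plan is to follow the three-step structure of the proof of Proposition \ref{main}, but working relative to the slice $\mathcal{C}/G_0$ in place of $\mathcal{C}$ and with the Frobenius adjunction $\Sigma_{d_1} d_0^{*} \dashv U$ playing the role of $G \times (\_) \dashv U$. Write $P$ for the double exponential monad at $S_{G_0}$ on $\mathcal{C}/G_0$ and $P_1$ for its counterpart at $S_{G_1}$ on $\mathcal{C}/G_1$; slice stability of double exponentiation supplies both, together with canonical isomorphisms $d_j^{*} P \cong P_1 d_j^{*}$ for $j = 0,1$ that will be used throughout.

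First, for any $\mathbb{G}$-object $(X_f, a)$ I would lift $P(X_f)$ to a $\mathbb{G}$-object. Using the alternative presentation of an action as a morphism $d_0^{*} P(X_f) \rTo d_1^{*} P(X_f)$ in $\mathcal{C}/G_1$, transport $a' : d_0^{*} X_f \rTo d_1^{*} X_f$ across the isomorphisms $d_j^{*} P \cong P_1 d_j^{*}$ and take the action to be $P_1(a')$. Functoriality of $P_1$, together with the analogous base change coherences for $m$, $s$ and $\nu$, propagates the unit and cocycle identities from $a'$ to the lift; this is the direct groupoid analogue of the definition $a^{P} = P(a) \circ t_{G,X}$ in the group case.

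Second, I would run the analogues of Lemma \ref{hannah} and Corollary \ref{nathan} over $\mathcal{C}/G_0$. Since $U$ is monadic and $\Sigma_{d_1} d_0^{*} \dashv U$ is Frobenius, the proofs go through unchanged: for any $\mathbb{G}$-object $(Y_g, b)$, natural transformations $(\mathbb{G}^{*}S)^{(X_f,a)} \rTo (\mathbb{G}^{*}S)^{(Y_g,b)}$ correspond to natural transformations $\epsilon' : (\mathbb{G}^{*}S)^{(X_f,a)} \rTo (\mathbb{G}^{*}S)^{\Sigma_{d_1} d_0^{*}(Y_g)}$ satisfying the Lemma \ref{hannah} equalizer condition, and these correspond via Lemma \ref{Frob} to natural transformations $\delta : S_{G_0}^{X_f} \rTo S_{G_0}^{Y_g}$ in $(\mathcal{C}/G_0)^{op}_{S_{G_0}}$, i.e., to morphisms $Y_g \rTo P(X_f)$ in $\mathcal{C}/G_0$. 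The only change in the mate formula is that $(d_1)_{*} d_0^{\#}(\delta)$ replaces $\delta^{G}$, exactly as flagged in the preamble to the proposition.

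The third step is to check that under the above bijections the Lemma \ref{hannah} equalizer condition on $\epsilon'$ matches the $\mathbb{G}$-equivariance condition $S_{G_0}^{b'} \, \delta = (d_1)_{*} d_0^{\#}(\delta) \, S_{G_0}^{a'}$ saying that the corresponding morphism $Y_g \rTo P(X_f)$ is a $\mathbb{G}$-homomorphism for the action built in step one. This is the groupoid counterpart of the identity $\overline{\delta^{G} S^{a}} = (S,\pi_2)^{m \times Id_Y} \bar{\delta}$ that closes the proof of Proposition \ref{main}, and I expect it to be the main technical obstacle: the base-change preservation isomorphisms $d_j^{*} P \cong P_1 d_j^{*}$ must be threaded through the adjoint-transpose calculation and then collapsed against the definition that $a^{P}$ arises from $P_1(a')$. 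Naturality in $(Y_g, b)$ is routine, as in the group case.
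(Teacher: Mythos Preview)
Your proposal is correct and matches the paper's proof essentially step for step: the paper also lifts $P_{G_0}(X_f)$ to a $\mathbb{G}$-object via $d_0^{*}P_{G_0}(X_f)\cong P_{G_1}d_0^{*}(X_f)\rTo^{P_{G_1}a'}P_{G_1}d_1^{*}(X_f)\cong d_1^{*}P_{G_0}(X_f)$, identifies $\mathbb{G}$-homomorphisms into it with natural transformations $\delta$ satisfying $(d_1)_{*}d_0^{\#}(\delta)\,S_{G_0}^{a}=S_{G_0}^{b}\,\delta$, and then invokes the Frobenius bijection and Lemma~\ref{hannah} for the monad induced by $\Sigma_{d_1}d_0^{*}\dashv U$ exactly as you describe. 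One small notational slip: in your equivariance condition write $S_{G_0}^{a},S_{G_0}^{b}$ (or the equivalent form $d_0^{\#}(\delta)\,S_{G_1}^{a'}=S_{G_1}^{b'}\,d_1^{\#}\delta$ over $G_1$), since $a',b'$ live in $\mathcal{C}/G_1$ and so cannot appear under $S_{G_0}^{(\_)}$.
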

By `stable under slicing' we mean that for any morphism $g : Z \rTo X$ the canonical morphism $g^*\mathbb{P}_X \rTo \mathbb{P}_Z g^*$, determined by the fact that $\Sigma_g \dashv g^*$ satisfies Frobenius reciprocity, is an isomorphism 
\begin{proof}
Let $(X_f,a)$ be a $\mathbb{G}$-object. Then $P_{G_0}(X_f)$ can be made into a $\mathbb{G}$-object by using 
\begin{eqnarray*}
d_0^*P_{G_0}(X_f) \rTo^{\cong} P_{G_1}d_0^*(X_f) \rTo^{P_{G_1}a'} P_{G_1}d_1^*(X_f) \rTo^{\cong} d_1^*P_{G_0}(X_f)\text{.}
\end{eqnarray*}
$\mathbb{G}$-homomorphisms from $(Y_g, c)$ to $P_{G_0}(X_f)$ correspond to natural transformations $\delta : S_{G_0}^{X_f} \rTo S_{G_0}^{Y_g}$ such that $({d_1})_*d_0^{\#}(\delta)S_{G_0}^{a}=S_{G_0}^{b}\delta$ (equivalently $d_0^{\#}(\delta)S_{G_1}^{a'}=S_{G_1}^{b'}d_1^{\#}\delta$, by adjoint transpose via $d_1^{\#} \dashv (d_1)_*$). Since $\mathcal{C}/G_0 \pile{ \rTo^{\Sigma_{d_1} d_0^*}\\ \lTo_U} [ \mathbb{G} ,\mathcal{C}]$ satisfies Frobenius reciprocity we know that there is an order isomorphism 
\begin{eqnarray*}
Nat[S_{G_0}^{X_f},S_{G_0}^{Y_g}] \cong Nat[(\mathbb{G}^*S)^{(X_f,a)},(\mathbb{G}^*S)^{(G_1 \times_{G_0} Y,m \times Id_Y)}]
\end{eqnarray*}
natural in $Y_g$ and so the result follows as in Section \ref{prelim} from the explicit description of this order isomorphism and application of Lemma \ref{hannah} with $\mathbb{T}$ the monad induced by $ \Sigma_{d_1} d_0^* \dashv U$.
\end{proof}
Let us recall the slice stability result that we need to proceed. The proof is clear from \cite{towslice}.
\begin{proposition}
If $\mathcal{C}$ is a category of spaces then for any object $X$, so is $\mathcal{C}/X$. The canonical Sierpi\'{n}ski object relative to $\mathcal{C}/X$ is $\mathbb{S}_X$ and pullback commutes with double exponentiation.
\end{proposition}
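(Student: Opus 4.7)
The plan is to verify each of Axioms 1 through 7 for $\mathcal{C}/X$ equipped with Sierpi\'{n}ski object $\mathbb{S}_X = X^*\mathbb{S}$, and to check separately that the pullback functors $g^* : \mathcal{C}/X \rTo \mathcal{C}/Z$ commute with double exponentiation. Axioms 1, 2 and 3 follow from standard properties of slice categories: order enrichment is inherited from $\mathcal{C}$; finite limits in $\mathcal{C}/X$ are computed as limits in $\mathcal{C}$; finite coproducts are coproducts in $\mathcal{C}$ together with the induced cotupling into $X$, so distributivity and pullback stability of coproducts transfer because pullback in $\mathcal{C}/X$ is pullback in $\mathcal{C}$. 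Since $X^*$ preserves finite products, $\mathbb{S}_X$ is an order internal distributive lattice, and the pullback-uniqueness property defining a Sierpi\'{n}ski object transfers because a pullback square in $\mathcal{C}/X$ is a pullback square in $\mathcal{C}$.

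The heart of the argument is Axiom 4 together with the pullback-commutation claim. For each $Y_f \in \mathcal{C}/X$ I would construct $\mathbb{P}_X(Y_f)$ as the pullback in $\mathcal{C}$
\begin{diagram}
\mathbb{P}_X(Y_f) & \rTo & \mathbb{P}(Y) \\
\dTo & & \dTo_{\mathbb{P}(f)} \\
X & \rTo^{\eta_X} & \mathbb{P}(X)
\end{diagram}
regarded as an object of $\mathcal{C}/X$ via its left-hand leg. Unwinding the pullback and the defining universal property of $\mathbb{P}$ in $\mathcal{C}$, a morphism $W_h \rTo \mathbb{P}_X(Y_f)$ in $\mathcal{C}/X$ is a natural transformation $\phi : \mathbb{S}^Y \rTo \mathbb{S}^W$ satisfying $\phi \circ \mathbb{S}^f = \mathbb{S}^h$. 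The key claim is that such data are in natural bijection with natural transformations $\mathbb{S}_X^{Y_f} \rTo \mathbb{S}_X^{W_h}$ in $(\mathcal{C}/X)^{op}$. One direction evaluates at the terminal object $X_{Id_X}$ of $\mathcal{C}/X$ to extract $\phi$, and tracks the naturality square along $h : W_h \rTo X_{Id_X}$ to obtain the condition $\phi \circ \mathbb{S}^f = \mathbb{S}^h$; the other direction uses Lemma \ref{Frob} applied to the Frobenius adjunctions $\Sigma_a \dashv a^*$ to reconstruct, from $\phi$, the full family $\mathbb{S}^{A \times_X Y} \rTo \mathbb{S}^{A \times_X W}$ indexed by $A_a$. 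The pullback-commutation claim $g^*\mathbb{P}_X \cong \mathbb{P}_Z g^*$ for $g : Z \rTo X$ then follows because $g^*$ preserves the defining pullback and $\eta$ is natural.

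With $\mathbb{P}_X$ in place, Axioms 5, 6 and 7 reduce to their $\mathcal{C}$-counterparts by transporting natural transformations between $(\mathcal{C}/X)^{op}_{\mathbb{S}_X}$ and $\mathcal{C}^{op}_{\mathbb{S}}$ using the change-of-base adjunction supplied by Lemma \ref{Frob}. A distributive lattice homomorphism $\alpha : \mathbb{S}_X^{Y_f} \rTo \mathbb{S}_X^{W_h}$ transports to such a morphism in $\mathcal{C}^{op}_{\mathbb{S}}$; by Axiom 5 in $\mathcal{C}$ it is $\mathbb{S}^g$ for a unique $g : W \rTo Y$, and uniqueness combined with compatibility with projections forces $fg = h$. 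Inflationary and deflationary idempotents in the lower and upper Kleisli categories of $\mathcal{C}/X$ correspond to such idempotents in $\mathcal{C}^{op}_{P_L}$ and $\mathcal{C}^{op}_{P_U}$ where they split by Axiom 6, and the resulting splittings are compatible with the projection to $X$ by naturality. Finally, Axiom 7 transfers because equalizers in $\mathcal{C}/X$ are equalizers in $\mathcal{C}$, and the coequalizer condition in $(\mathcal{C}/X)^{op}_{\mathbb{P}_X}$ can be tested on morphisms, each of which corresponds under the change of base to a morphism in $\mathcal{C}^{op}_{\mathbb{P}}$ that factors by Axiom 7 in $\mathcal{C}$. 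The main obstacle is the middle paragraph: establishing the bijection between morphisms into the proposed pullback and natural transformations in $(\mathcal{C}/X)^{op}$ requires careful bookkeeping with the strength and naturality of $\mathbb{P}$, but once in hand the other axioms follow by essentially formal transport along Lemma \ref{Frob}.
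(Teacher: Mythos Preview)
The paper's own ``proof'' is a one-line citation to \cite{towslice}, so there is no in-paper argument to compare against; your outline is in the spirit of what that reference carries out, and the easy parts (Axioms 1--3, and the transport of Axioms 5--7 once 4 is established) are essentially as you say.

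There is, however, a real gap in your treatment of Axiom 4. You correctly propose $\mathbb{P}_X(Y_f)$ as the pullback of $\mathbb{P}(f)$ along $\eta_X$, and you correctly identify that maps $W_h \rTo \mathbb{P}_X(Y_f)$ in $\mathcal{C}/X$ correspond to $\phi:\mathbb{S}^Y \rTo \mathbb{S}^W$ with $\phi\,\mathbb{S}^f = \mathbb{S}^h$. But the claimed bijection between such $\phi$ and natural transformations $\mathbb{S}_X^{Y_f} \rTo \mathbb{S}_X^{W_h}$ in the slice does not follow from ``strength and naturality'' or from Lemma~\ref{Frob} alone. Restriction along $!^*:\mathcal{C}\rTo\mathcal{C}/X$ extracts $\phi$ from a slice natural transformation, and Lemma~\ref{Frob} gives only an adjunction between $\mathcal{C}^{op}_{\mathbb{S}}$ and $(\mathcal{C}/X)^{op}_{\mathbb{S}_X}$, not a full-and-faithful passage in the direction you need. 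For a general test object $A_a$ the value $\mathbb{S}_X^{Y_f}(A_a)$ is $\mathcal{C}(A\times_X Y,\mathbb{S})$, and $A\times_X Y$ sits inside $A\times Y$ as a (coreflexive) equalizer; to manufacture the component $\psi_{A_a}$ from $\phi$ and to show it is \emph{uniquely} so determined one needs $\mathbb{S}^{(\_)}$ to send that equalizer to a coequalizer in $\mathcal{C}^{op}_{\mathbb{P}}$. That is precisely Axiom 7 (equivalently, that $\mathbb{P}$ preserves coreflexive equalizers), and the paper flags this explicitly in the paragraph just before the proposition. Your invocation of the adjunctions $\Sigma_a \dashv a^*$ does not substitute for this step: those adjunctions let you move between slices, but they do not by themselves pin down the slice natural transformation from the single datum $\phi$.

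So the fix is to fold Axiom 7 into the verification of Axiom 4, not to save it for last.
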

By combining these last two propositions we have that $\mathbb{G}^*\mathbb{S}$ is a double exponentiable object in $[ \mathbb{G} ,\mathcal{C}]$, if $\mathcal{C}$ is a category of spacees. Checking that the remaining axioms are $\mathbb{G}$-stable is a straightforward re-application of the arguments deployed in Section \ref{axioms}, given that \cite{towslice} shows that the axioms hold in $\mathcal{C}/G_0$. So we have shown in outline:
\begin{theorem}
If $\mathcal{C}$ is a category of spaces and $\mathbb{G}$ an internal groupoid, then $[\mathbb{G},\mathcal{C}]$ is a category of spaces.
\end{theorem}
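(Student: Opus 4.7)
The plan is to verify each of Axioms 1--7 for $[\mathbb{G},\mathcal{C}]$ by reusing the arguments of Section \ref{axioms} essentially verbatim, with the systematic replacement of $\delta^{G}$ (which in the group case was $!^{G}_{*}(!^{G})^{\#}\delta$) by $({d_1})_{*}d_0^{\#}\delta$, and with the two preceding propositions doing the heavy lifting for Axiom 4.

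First I would deal with Axioms 1--3. Since the forgetful functor $U:[\mathbb{G},\mathcal{C}]\rTo \mathcal{C}/G_0$ is monadic, it creates finite limits; order enrichment is pulled back from $\mathcal{C}/G_0$ (itself a category of spaces by the preceding proposition). Finite coproducts are constructed just as in the group case: a coproduct of $\mathbb{G}$-objects $(X_f,a)$ and $(Y_g,b)$ is obtained by using the canonical isomorphism $d_0^{*}(X_f + Y_g)\cong d_0^{*}X_f + d_0^{*}Y_g$ (which exists by Axiom 2 applied to $\mathcal{C}/G_0$) to transport $a\sqcup b$ across, and pullback along a $\mathbb{G}$-homomorphism preserves these coproducts because $U$ creates pullbacks. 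The Sierpi\'nski object is $\mathbb{G}^{*}\mathbb{S}$, and Axiom 3 reduces to the corresponding property in $\mathcal{C}/G_0$ (using the slice-stability proposition). Axiom 4 is precisely the combination of the two propositions immediately preceding the theorem.

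Next I would handle Axioms 5--7. The description of Kleisli morphisms in $[\mathbb{G},\mathcal{C}]^{op}_{\mathbb{P}_{\mathbb{G}}}$ derived in the proof of the double exponentiability proposition identifies them with natural transformations $\delta:\mathbb{S}_{G_0}^{X_f}\rTo\mathbb{S}_{G_0}^{Y_g}$ such that $d_0^{\#}(\delta)\mathbb{S}_{G_1}^{a'} = \mathbb{S}_{G_1}^{b'}d_1^{\#}\delta$. For Axiom 5, a distributive lattice homomorphism at the $[\mathbb{G},\mathcal{C}]$ level yields one at the $\mathcal{C}/G_0$ level, where it is of the form $\mathbb{S}_{G_0}^{f}$ for a unique $f$ (Axiom 5 in the slice); the compatibility condition with $a'$ and $b'$ then forces $f$ to be a $\mathbb{G}$-homomorphism. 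For Axiom 6, given an inflationary idempotent join semilattice homomorphism $\delta$ in the $\mathbb{G}$-Kleisli category, I would split it in $(\mathcal{C}/G_0)^{op}_{P_L}$ as $\gamma\mathbb{S}_{G_0}^{q}$ via Lemma \ref{meet}, define $\nu := \gamma\mathbb{S}_{G_1}^{a'}\mathbb{S}_{G_1}^{q}$ on $X_0$, verify that $\nu$ is a distributive lattice homomorphism (repeating the chain of inequalities from the proof for the group case, but now over $G_1$ rather than $G$), invoke Axiom 5 in $\mathcal{C}/G_1$ to realise $\nu = \mathbb{S}_{G_1}^{t'}$, and check the groupoid cocycle identities for $t'$. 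Axiom 7 is routine: $\mathbb{S}^{e}$ remains a split epimorphism in the $\mathbb{G}$-Kleisli category, and factorisations through it automatically respect the groupoid action because $e$ is a $\mathbb{G}$-homomorphism.

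The only point requiring genuine care will be Axiom 6, and specifically the verification that the $G$-level manipulations of Lemma \ref{meet} transfer to the groupoid setting. The group-case calculation exploited the fact that $(\_)^{G}$ is a literal functor on the presheaf category, so inequalities of natural transformations extend componentwise; here $({d_1})_{*}d_0^{\#}$ is a composite of a left and a right adjoint between different slices, and one must check that it preserves the relevant binary meets and inequalities. This reduces, via the adjoint transpose $d_0^{\#}\dashv(d_1)_{*}$, to pullback-stability of meets in $\mathcal{C}/G_1$, which holds because double exponentiation commutes with pullback (the slice-stability proposition) and because meets in $\mathbb{S}_{G_1}$ are the restriction of those in $\mathbb{S}_{G_0}$ along $d_0^{*}$ and $d_1^{*}$. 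Once this compatibility is in hand, every argument of Section \ref{axioms} goes through line for line, and the theorem follows.
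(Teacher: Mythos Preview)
Your proposal is correct and follows essentially the same approach as the paper, which itself only gives an outline: combine the two preceding propositions to get Axiom 4, and then replay the arguments of Section \ref{axioms} over $\mathcal{C}/G_0$ (using slice stability) with $({d_1})_{*}d_0^{\#}$ in place of $(\_)^{G}$. Your write-up is in fact more detailed than the paper's own sketch; the only minor slip is notational --- in your Axiom 6 argument the $\gamma$ and $q$ must be pulled back to $G_1$ (i.e.\ one should write $d_0^{\#}(\gamma)\,\mathbb{S}_{G_1}^{a'}\,\mathbb{S}_{G_1}^{d_1^{*}q}$ rather than $\gamma\,\mathbb{S}_{G_1}^{a'}\,\mathbb{S}_{G_1}^{q}$) --- but the intended construction is clear and correct.
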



\end{document}